\begin{document}
\setcounter{page}{1}
\issue{\copyright \,\textsf{2019} \textsc{Saeed Salehi \& Mohammadsaleh Zarza}}

\title{First-Order Continuous Induction, \\ and a Logical Study of  Real Closed  Fields$^\bigstar$}

\address{\sl \textsc{Saeed Salehi}, Research Institute for Fundamental Sciences \textup{(}RIFS\textup{)}, University of Tabriz,  P.O.Box~51666--16471, Bahman 29$^\text{th}$ Boulevard, Tabriz, IRAN.
\\
\textup{$\bigstar$ This is a part of the Ph.D. thesis of the second author written under the supervision of the first author at the University of Tabriz, {\sc Iran}.}}

\author{Saeed Salehi\\
Research Institute for Fundamental Sciences \textup{(RIFS)},
University of Tabriz, \\ P.O.Box~51666--16471,   Bahman 29$^\text{th}$ Boulevard, Tabriz, IRAN\\
http://saeedsalehi.ir/ \; salehipour{@}tabrizu.ac.ir
\and
Mohammadsaleh  Zarza\\
Department of Mathematics,
University of Tabriz, \\ P.O.Box~51666--16471,   Bahman 29$^\text{th}$ Boulevard, Tabriz, IRAN\\
msz1982{@}gmail.com
} \maketitle

\runninghead{\textup{S. Salehi  \&         M.  Zarza}}
        {First-Order Continuous Induction, and a Logical Study of  Real Closed  Fields}

\begin{abstract}
Over the last century, the principle of   ``induction on the continuum'' has been studied by different authors in different formats. All of these different readings are equivalent to one of the three versions that we isolate in this paper. We also formalize those three forms  (of ``continuous induction'') in  first-order logic and prove that two of them are equivalent and sufficiently strong to completely axiomatize the first-order theory of the real closed (ordered) fields. We show that the third weaker form of  continuous induction is  equivalent with the Archimedean property.
We study some equivalent axiomatizations for the theory of real closed fields and propose a first-order scheme of the fundamental theorem of algebra as an alternative axiomatization for this theory (over the theory of ordered fields).
\end{abstract}

\begin{keywords}
First-Order Logic;  Complete Theories;  Axiomatizing the Field of Real Numbers;  Continuous Induction;  Real Closed Fields.

\textbf{2010 AMS MSC}:
 03B25,  
 03C35, 
 03C10, 
 12L05. 
\end{keywords}

\section{Introduction}\label{sec:intro}
Real Analysis
is more than a haphazard accumulation of facts about the ordered field of  real numbers $\mathbb{R}$.
 Indeed, Real Analysis is a systematic study of $\mathbb{R}$ (and functions on $\mathbb{R}$, etc.). The most usual systematic way of studying mathematical objects is via axiomatizations. Some axiomatic systems are just definitions; such as the axioms of Group Theory. Indeed, examples of groups abound in mathematics and other scientific fields.
However, some axiomatizations are much deeper than definitions; one such example is the axiom system of  Complete Ordered Fields. An {\em ordered   field} is {\em complete} when it contains the supremum (least upper bound)
of every nonempty and bounded
subset of itself (let us note that this notion of completeness is formalized in second-order logic).
There are lots of ordered fields in Mathematics, but only one of them, up to isomorphism, is complete (by Dedekind's Theorem; see e.g. \cite{spivak}).
The
assumption of the existence of a complete ordered field is not a trivial one; though many textbooks on Mathematical Real Analysis start off with the axioms of complete ordered fields, and take $\mathbb{R}$ as a (indeed, as {\em the}) model of this theory.
Of course, this is not the only way to do real analysis; by the arithmetization of analysis, we can construct $\mathbb{R}$ from $\mathbb{Q}$, and $\mathbb{Q}$ from $\mathbb{Z}$, and finally $\mathbb{Z}$ from $\mathbb{N}$.
It should be pointed out that we can also reverse this foundational set-up by starting
with the assumption that $\mathbb{R}$ is a complete ordered field; then we can construct $\mathbb{Q}$ as the smallest subfield of
$\mathbb{R}$, $\mathbb{Z}$ as the smallest sub-ring of $\mathbb{R}$ that contains $1$, and $\mathbb{N}$ as the non-negative elements of $\mathbb{Z}$.

Apart from these philosophical and foundational concerns, in this paper we are interested in the properties of $\mathbb{R}$ as a complete ordered  field.
Indeed, there are several different axiomatizations  for  the (second-order) theory of complete ordered fields, over the theory of Ordered Fields $\texttt{\textbf{OF}}$: (1)~the existence of supremum (the least upper bound) for every nonempty and bounded subset; (2)~the existence of infimum (the greatest lower bound) for every nonempty and bounded subset; and (3)~the nonexistence of cuts (with gaps), i.e., a partition into two disjoint subsets in such a way that every element of the first set is smaller than all the elements of the second set, and the first set has no greatest element and the second set has no smallest element.
Interestingly, there are many different equivalent statements for the completeness axiom;
indeed, as many as 72
of them are listed
 in \cite{deteisman}.

As shown by Tarski, the first-order theory of $\mathbb{R}$, i.e., the collection of first-order sentences in the language of ordered fields that hold in the ordered field of real numbers, is axiomatizable by a computable
set of axioms known to modern algebraists as $\texttt{\textbf{RCF}}$ (real closed fields---see Definition~\ref{def:rcf}). Therefore,
$\texttt{\textbf{RCF}}$ is a complete theory, i.e., any first order sentence formulated in the language of ordered fields is
either provable or refutable  in $\texttt{\textbf{RCF}}$. This is in sharp contrast to the semiring $\mathbb{N}$ of natural numbers, the
ring $\mathbb{Z}$ of integers, and the field $\mathbb{Q}$ of rationals, none of whose first-order theories can be axiomatized
by a computable set of axioms, thanks to the work of Kurt G\"odel (for $\mathbb{N}$ and $\mathbb{Z}$) and Julia Robinson (for
$\mathbb{Q}$). Thus, the formalization of the fundamentals of Real Analysis naturally leads to central concepts
in Modern Algebra. As we shall see, formalizing various completeness axioms for the ordered field of
real numbers may (in some cases) give rise to various axiomatizations of the theory of real closed fields.

In Section~\ref{sec:ci} we isolate three schemes of the principle of continuous induction, since all the different formats of this principle that have appeared in the literature over the last century are equivalent to one of these three versions.
We will show that two of them are equivalent to each other, and to the completeness principle over $\texttt{\textbf{OF}}$; while the third one is weaker. We formalize these three schemes in first-order logic and will compare their strength with each other.
In Section~\ref{sec:fo} we study some first-order formalizations of the completeness axiom (of  ordered fields) and will see their equivalence with one another by first-order proofs. We will show that the weak principle of continuous induction is equivalent to the Archimedean property (of ordered abelian groups), and the first-order formalization of the (two equivalent) strong principle(s) of continuous induction can axiomatize the theory of real closed fields (over $\texttt{\textbf{OF}}$).
In Section~\ref{sec:rcf} we study the theory of real closed fields more deeply and introduce a first-order scheme of the Fundamental Theorem of Algebra as an alternative axiomatization of this theory over $\texttt{\textbf{OF}}$. In Section~\ref{sec:conc}, we summarize the new and old results of the paper and propose a set of open problems for future investigations. In the Appendix we present a slightly modified proof of Kreisel and Krivine \cite[Chapter~4, Theorem~7]{kk} for Tarski's Theorem on the
 completeness (and the decidability of a first-order axiomatization) of the theory of real closed fields.

\section{Continuous Induction, Formalized in First-Order Logic}\label{sec:ci}
``Continuous Induction'',  ``Induction over the Continuum'',  ``Real Induction'',  ``Non-Discrete Induction'', or the like, are some terms used by authors for referring  to some statements about the continuum  $\mathbb{R}$. These statements are  as strong as the Completeness Axiom of $\mathbb{R}$ and a motivation for their introduction into the literature of mathematics is the easy and sometimes unified ways they provide for proving some basic theorems of Mathematical Analysis. Here, we do not intend to give a thorough history of the subject or list all of the relevant literature in the References. For a ``telegraphic history'' we refer the reader to  \cite{kalantari}; and for an introduction to the subject we refer to \cite{clark} and the references therein. The earliest use of continuous induction is perhaps  the  paper \cite{chao} dating back to 1919. Below, we will formalize it in first-order logic. We will also formalize the formulations of  continuous induction presented in \cite{clark,jing,kalantari} and \cite{hath}; and later will compare their strength with each other. Let us finally note that
 \cite{mashkouri} is the only textbook (in Persian/Farsi) in which we could find some mention of  continuous induction (referring   to \cite{jing}).

The principle of continuous induction introduced in \cite{chao} (see also \cite{clark}) is equivalent to the following statement:

\begin{definition}[$\texttt{\textbf{CI}}_1$]\label{def:ind''}

\noindent
For any $S\subseteq\mathbb{R}$, if
\begin{enumerate}
\item for some $a\!\in\!\mathbb{R}$ we have $]\!-\infty,a]\subseteq S$, and
\item there exists some $\epsilon\!\!>\!\!0$ such that for all  $x\!\in\!\mathbb{R}$ if $x\!\in\!S$, then $[x,x\!+\!\epsilon]\subseteq S$,
\end{enumerate}
then $S\!=\!\mathbb{R}$.
\hfill\ding{71}\end{definition}

Its formalization in first-order logic (where the language contains $\{<,0,+\}$) is:

\begin{definition}[$\texttt{\textbf{DCI}}_1$]
\label{def:foind''}

\noindent
Let $\texttt{\textbf{DCI}}_1$ (definable continuous induction) be the following first-order scheme
$$\exists x\,\forall y\!\!\leqslant\!\!x\,\varphi(y)\,\wedge\,  \exists\epsilon\!\!>\!\!0\,\forall x\big(\varphi(x)\rightarrow\forall y\,[x\!\!\leqslant\!\!y\!\!\leqslant\!\!x\!+\!\epsilon
\rightarrow\varphi(y)]\big)\longrightarrow\forall x\,\varphi(x)$$
where $\varphi$ is an arbitrary formula.
\hfill\ding{71}\end{definition}

Continuous induction in \cite{kalantari} (see also \cite{jing} and \cite{clark}) is equivalent to the following:

\begin{definition}[$\texttt{\textbf{CI}}$]\label{def:ind}

\noindent
For any $S\subseteq\mathbb{R}$, if
\begin{enumerate}
\item for some $a\!\in\!\mathbb{R}$ we have $]\!-\infty,a[\,\subseteq S$, and
\item for any $x\!\in\!\mathbb{R}$ if $]\!-\infty,x[\,\subseteq S$, then there exists some  $\epsilon\!\!>\!\!0$ such that $]\!-\infty,x\!+\!\epsilon[\,\subseteq S$,
\end{enumerate}
then $S\!=\!\mathbb{R}$.
\hfill\ding{71}\end{definition}

Let us note that this form of real induction can be formulated by using  $<$ only:

\noindent For any $S\subseteq\mathbb{R}$, if
\begin{enumerate}
\item for some $a\!\in\!\mathbb{R}$ we have $]\!-\infty,a[\,\subseteq S$, and
\item for any $x\!\in\!\mathbb{R}$ if $]\!-\infty,x[\,\subseteq S$, then there exists some  $y\!\!>\!\!x$ such that $]\!-\infty,y[\,\subseteq S$,
\end{enumerate}
then $S\!=\!\mathbb{R}$.

The first-order formalization of this is:

\begin{definition}[$\texttt{\textbf{DCI}}$]
\label{def:foind}

\noindent
Let $\texttt{\textbf{DCI}}$ be the first-order scheme
$$\exists x\,\forall y\!\!<\!\!x\,\varphi(y)\,\wedge\,  \forall x\big[\forall y\!\!<\!\!x\,\varphi(y)\rightarrow\exists z\!\!>\!\!x\,\forall y\!\!<\!\!z\,\varphi(y)\big]\longrightarrow\forall x\,\varphi(x)$$
for an arbitrary formula  $\varphi$.
\hfill\ding{71}\end{definition}

When the first-order language contains $0$ and $+$ also, then this can be formalized as

$$\exists x\,\forall y\!\!<\!\!x\,\varphi(y)\,\wedge\,  \forall x\big[\forall y\!\!<\!\!x\,\varphi(y)\rightarrow\exists \epsilon\!\!>\!\!0\,\forall y\!\!<\!\!x\!+\!\epsilon\,\varphi(y)\big]\longrightarrow
\forall x\,\varphi(x).$$

Note that in $\texttt{\textbf{DCI}}_1$ there exists a fixed $\epsilon\!\!>\!\!0$ such that for all $x$ the second assumption holds, but in $\texttt{\textbf{DCI}}$ for any $x$ there exists some $\epsilon_x\!\!>\!\!0$ (which depends on $x$) such that the second assumption holds.

Finally, there exists a third version of continuous induction which appears in \cite{hath}:

\begin{definition}[$\texttt{\textbf{CI}}_2$]\label{def:ind'}

\noindent
For any $S\subseteq\mathbb{R}$, if
\begin{enumerate}
\item for some $a\!\in\!\mathbb{R}$ we have $]\!-\infty,a]\subseteq S$, and
\item for any $x\!\in\!\mathbb{R}$ if $(-\infty,x]\subseteq S$, then there exists some  $y\!\!>\!\!x$ such that $]\!-\infty,y[\,\subseteq S$, and
\item for any $x\!\in\!\mathbb{R}$ if $]\!-\infty,x[\,\subseteq S$, then $x\!\in\!S$,
\end{enumerate}
then $S\!=\!\mathbb{R}$.
\hfill\ding{71}\end{definition}

Its formalization in the first-order languages that contain $<$ is as follows:

\begin{definition}[$\texttt{\textbf{DCI}}_2$]
\label{def:foind'}

\noindent
Let $\texttt{\textbf{DCI}}_2$ denote  the first-order scheme
$$\exists x\,\forall y\!\!\leqslant\!\!x\,\varphi(y) \wedge   \forall x\big[\forall y\!\!\leqslant\!\!x\,\varphi(y)\rightarrow\exists z\!\!>\!\!x\,\forall y\!\!<\!\!z\,\varphi(y)\big]\wedge\forall x[\forall y\!\!<\!\!x\,\varphi(y)\rightarrow\varphi(x)]
\longrightarrow\forall x\,\varphi(x)$$
where  $\varphi$ is an arbitrary formula.
\hfill\ding{71}\end{definition}

Now, we compare the strength of these three schemes  with each other.

\begin{theorem}[$\texttt{\textbf{DCI}}
\Longleftrightarrow\texttt{\textbf{DCI}}_2$]
\label{th:indind'}

\noindent
In any  linear order   $\langle D;<\rangle$, the scheme \textup{$\texttt{\textbf{DCI}}$} holds, if and only if \textup{$\texttt{\textbf{DCI}}_2$} holds.
\end{theorem}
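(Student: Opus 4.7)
The plan is to prove the two implications separately; both reduce to manipulating $\varphi$ or an auxiliary formula derived from it, and neither requires any property of $\langle D;<\rangle$ beyond linearity.

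For $\texttt{\textbf{DCI}}\Rightarrow\texttt{\textbf{DCI}}_2$, I would apply $\texttt{\textbf{DCI}}$ to the very same formula $\varphi$. Assuming the three hypotheses of $\texttt{\textbf{DCI}}_2$, clause~(1) immediately yields $\exists x\,\forall y\!<\!x\,\varphi(y)$ (the strict inequality is weaker than $\leqslant$), and the second hypothesis of $\texttt{\textbf{DCI}}$ is obtained by chaining clause~(3) into clause~(2): if $\forall y\!<\!x\,\varphi(y)$ then by~(3) also $\varphi(x)$, so $\forall y\!\leqslant\!x\,\varphi(y)$, and~(2) produces some $z\!>\!x$ with $\forall y\!<\!z\,\varphi(y)$. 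A single invocation of $\texttt{\textbf{DCI}}$ finishes this direction.

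For $\texttt{\textbf{DCI}}_2\Rightarrow\texttt{\textbf{DCI}}$, the natural move is to relativise: given $\varphi$ satisfying the hypotheses of $\texttt{\textbf{DCI}}$, I would apply $\texttt{\textbf{DCI}}_2$ to the auxiliary formula $\psi(x)\!\equiv\!\forall y\!\leqslant\!x\,\varphi(y)$. In any linear order one easily checks that $\forall y\!\leqslant\!x\,\psi(y)$ is equivalent to $\psi(x)$, and that $\forall y\!<\!z\,\psi(y)$ is equivalent to $\forall u\!<\!z\,\varphi(u)$ (the nontrivial direction of the latter instantiates the outer quantifier with $y\!=\!u$, which uses only transitivity---no density is needed). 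With these simplifications, the three hypotheses of $\texttt{\textbf{DCI}}_2$ for $\psi$ all reduce to statements that follow from the two hypotheses of $\texttt{\textbf{DCI}}$ for $\varphi$; the crucial observation is that $\forall y\!<\!x\,\varphi(y)$, together with hypothesis~(2) of $\texttt{\textbf{DCI}}$, produces a $z\!>\!x$ with $\forall y\!<\!z\,\varphi(y)$, whence $\varphi(x)$ (since $x\!<\!z$). This ``free upgrade'' from $<$ to $\leqslant$ supplies both clauses~(1) and~(3) of $\texttt{\textbf{DCI}}_2$ for $\psi$, while clause~(2) for $\psi$ translates directly into hypothesis~(2) of $\texttt{\textbf{DCI}}$ for $\varphi$. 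Hence $\texttt{\textbf{DCI}}_2$ yields $\forall x\,\psi(x)$, which immediately gives $\forall x\,\varphi(x)$.

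The only real obstacle is bookkeeping---keeping the various combinations of strict and nonstrict inequalities straight and confirming that the claimed equivalences between $\psi$- and $\varphi$-statements hold in an arbitrary (not necessarily dense) linear order. Once the substitution $\psi(x)\!\equiv\!\forall y\!\leqslant\!x\,\varphi(y)$ is chosen, the rest of the argument is routine symbol-pushing, and the whole proof is purely syntactic, invoking no field, group, or Archimedean structure.
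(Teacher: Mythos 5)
Your proof is correct, and its essential content coincides with the paper's: the first direction is exactly the argument the paper leaves to the reader, and in the second direction your ``free upgrade''---that hypothesis (ii) of $\texttt{\textbf{DCI}}$ turns $\forall y\!<\!x\,\varphi(y)$ into some $z\!>\!x$ with $\forall y\!<\!z\,\varphi(y)$ and hence into $\varphi(x)$ and $\forall y\!\leqslant\!x\,\varphi(y)$---is precisely the paper's key step. The only difference is that the paper applies $\texttt{\textbf{DCI}}_2$ directly to $\varphi$ itself, verifying its three hypotheses for $\varphi$, so your detour through the auxiliary formula $\psi(x)\equiv\forall y\!\leqslant\!x\,\varphi(y)$ is correct but dispensable.
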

\begin{proof}
The proof of $\texttt{\textbf{DCI}}
\Longrightarrow\texttt{\textbf{DCI}}_2$ is rather easy (and so it is left to the reader).

\noindent
For $\texttt{\textbf{DCI}}_2
\Longrightarrow\texttt{\textbf{DCI}}$, suppose that $\texttt{\textbf{DCI}}_2$ holds in a linearly ordered set $\langle D;<\rangle$, and  assume  that for a formula $\varphi(x)$ and some $a\!\in\!D$ we have

\begin{itemize}\itemindent=1.75em
\item[(i)]~$\forall y\!\!<\!\!a\,\varphi(y)$,  and
\item[(ii)]~$\forall x\big[\forall y\!\!<\!\!x\,\varphi(y)\rightarrow\exists z\!\!>\!\!x\,\forall y\!\!<\!\!z\,\varphi(y)\big]$.
\end{itemize}

\noindent
Now, we show that the following relations hold:
\begin{itemize}\itemindent=1.75em
\item[(1)] $\exists x\,\forall y\!\!\leqslant\!\!x\,\varphi(y)$,
\item[(2)] $\forall x\big[\forall y\!\!\leqslant\!\!x\,\varphi(y)\rightarrow\exists z\!\!>\!\!x\,\forall y\!\!<\!\!z\,\varphi(y)\big]$, and
\item[(3)] $\forall x[\forall y\!\!<\!\!x\,\varphi(y)\rightarrow\varphi(x)]$.
\end{itemize}

\noindent
This will show (by using $\texttt{\textbf{DCI}}_2$) that $\forall x\varphi(x)$ holds.

 The relation (2) follows straightforwardly from (ii). For (3) fix some $d\in D$ and assume that $\forall y\!\!<\!\!d\,\varphi(y)$ holds. Then by (ii) there exists some $d'\!\!>\!\!d$ such that
$\forall y\!\!<\!\!d'\,\varphi(y)$ holds too. Whence, $\varphi(d)$ holds as well. The relation (1) holds for $x\!\!=\!\!a$ for exactly the same reason.
 \end{proof}

It follows from the proof of Theorem~\ref{th:indind'} that  $\texttt{\textbf{CI}}$ and $\texttt{\textbf{CI}}_2$ are equivalent with each other, in any  linear order. Now we show that $\texttt{\textbf{CI}}_1$ is strictly weaker than $\texttt{\textbf{CI}}$ (and $\texttt{\textbf{CI}}_2$).

\begin{theorem}[$\texttt{\textbf{DCI}}
\not\Longleftrightarrow\texttt{\textbf{DCI}}_1$]
\label{th:indind''}

\noindent
In any  ordered divisible abelian group, if \textup{$\texttt{\textbf{DCI}}$} holds, then \textup{$\texttt{\textbf{DCI}}_1$} holds too. But not vice versa: \textup{$\texttt{\textbf{DCI}}_1$} holds in the rational numbers $\mathbb{Q}$ but \textup{$\texttt{\textbf{DCI}}$} does not.
\end{theorem}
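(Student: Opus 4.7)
The plan is to prove the two halves of the theorem separately. For the implication $\texttt{\textbf{DCI}}\Longrightarrow\texttt{\textbf{DCI}}_1$ in an ordered divisible abelian group, I would feed $\texttt{\textbf{DCI}}$ the auxiliary formula $\psi(x)\equiv\forall y\!\leqslant\!x\,\varphi(y)$, where $\varphi$ is a formula satisfying the two hypotheses of $\texttt{\textbf{DCI}}_1$ with witnesses $a$ and $\epsilon>0$. The initial-segment clause of $\texttt{\textbf{DCI}}$ applied to $\psi$ is immediate from (i), since every $y<a$ enjoys $\psi(y)$. For the successor clause, assume $\forall y\!<\!x\,\psi(y)$; this unpacks to $\forall u\!<\!x\,\varphi(u)$. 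The crucial move is to promote this to $\psi(x)$: invoke (ii) at the point $x-\epsilon$, which is available because the group has additive inverses, and at which $\varphi$ holds by the unpacked hypothesis; clause (ii) then yields $\varphi(x)$, hence $\psi(x)$. Applying (ii) once more at $x$ extends $\varphi$ throughout $[x,x+\epsilon]$, so with $z:=x+\epsilon$ we obtain $\forall y\!<\!z\,\psi(y)$. The scheme $\texttt{\textbf{DCI}}$ applied to $\psi$ then delivers $\forall x\,\psi(x)$, which in particular gives $\forall x\,\varphi(x)$.

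For the non-implication, I would work inside $\mathbb{Q}$. The positive assertion $\mathbb{Q}\models\texttt{\textbf{DCI}}_1$ is a meta-level Archimedean induction: given witnesses $(a,\epsilon)$, iterating (ii) shows that $\varphi$ holds on $(-\infty,a+n\epsilon]$ for every natural $n$, and any rational $x$ is caught by some $a+n\epsilon\geqslant x$. For the failure $\mathbb{Q}\not\models\texttt{\textbf{DCI}}$, I would exhibit the ordered-field formula $\varphi(x)\equiv(x<0)\vee(x\cdot x<1+1)$; it defines $(-\infty,\sqrt{2}\,)\cap\mathbb{Q}$, a Dedekind cut without a rational supremum. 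The first hypothesis of $\texttt{\textbf{DCI}}$ holds at $x=0$; for the second, $\forall y\!<\!x\,\varphi(y)$ forces $x^{2}\leqslant 2$, whence $x^{2}<2$ in $\mathbb{Q}$, and a small rational $\delta$ can be chosen so that $(x+\delta)^{2}<2$, providing the required $z=x+\delta$. Nevertheless the conclusion $\forall x\,\varphi(x)$ fails, for instance at $x=2$.

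The main obstacle in part (a) is the backwards step $x\mapsto x-\epsilon$ that promotes $\forall u\!<\!x\,\varphi(u)$ to $\psi(x)$; this move is essential because the update rule of $\texttt{\textbf{DCI}}_1$ can only be activated at a point where $\varphi$ is already known to hold, and it is enabled precisely by the uniform $\epsilon$ together with additive inversion. A secondary subtlety for part (b) is that the counterexample genuinely must invoke multiplication, since the theory of ordered divisible abelian groups is complete, and therefore any purely additive failure of $\texttt{\textbf{DCI}}$ in $\mathbb{Q}$ would transfer to $\mathbb{R}$, where $\texttt{\textbf{DCI}}$ in fact holds by Dedekind completeness.
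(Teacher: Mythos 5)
Your proposal is correct, but both halves take a genuinely different route from the paper's. For $\texttt{\textbf{DCI}}\Longrightarrow\texttt{\textbf{DCI}}_1$ the paper applies $\texttt{\textbf{DCI}}$ directly to $\varphi$: from $\forall y\!<\!b\,\varphi(y)$ it infers $\varphi(b\!-\!\frac{\epsilon}{2})$, and a single application of hypothesis (ii) gives $\varphi$ on $[b\!-\!\frac{\epsilon}{2},b\!+\!\frac{\epsilon}{2}]$, hence $\forall y\!<\!b\!+\!\frac{\epsilon}{2}\,\varphi(y)$. You instead run $\texttt{\textbf{DCI}}$ on the auxiliary formula $\psi(x)=\forall y\!\leqslant\!x\,\varphi(y)$ and apply (ii) twice, at $x\!-\!\epsilon$ and then at $x$. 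Your version never divides by $2$, so it in fact works in an arbitrary ordered abelian group --- divisibility enters the paper's proof only through $\epsilon/2$ --- while the paper's is shorter. (The detour through $\psi$ is dispensable: your two applications of (ii) already yield $\forall y\!<\!x\!+\!\epsilon\,\varphi(y)$, so $\texttt{\textbf{DCI}}$ could be applied to $\varphi$ itself.) For $\mathbb{Q}\vDash\texttt{\textbf{DCI}}_1$ the paper argues inside this theorem via the infimum, taken in $\mathbb{R}$, of the set of counterexamples; your Archimedean iteration over the intervals $]\!-\infty,a\!+\!n\centerdot\epsilon]$ is precisely the argument the paper gives later in Theorem~\ref{th:arch}, and the paper explicitly notes that this furnishes a second proof of $\mathbb{Q}\vDash\texttt{\textbf{DCI}}_1$. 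Your counterexample formula $\varphi(x)=[x\!<\!0\vee x^2\!<\!2]$ is exactly the paper's, and your closing remark that multiplication is unavoidable (by completeness of the theory of ordered divisible abelian groups plus $\mathbb{R}\vDash\texttt{\textbf{DCI}}$) is a pleasant observation not made in the paper.

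One small repair in the last part: the claim that $\forall y\!<\!x\,\varphi(y)$ \emph{forces} $x^2\leqslant 2$ is false for $x\!<\!0$ (take $x=-10$: the hypothesis holds vacuously through the first disjunct, yet $x^2=100$). You need the paper's case split: for $x\!<\!0$ take $z=x/2$, for $x\!=\!0$ take $z=1$, and reserve the argument ``if $x\!>\!0$ and $x^2\!>\!2$ then some rational $y$ with $0\!<\!y\!<\!x$ and $y^2\!>\!2$ violates $\forall y\!<\!x\,\varphi(y)$'' for the case $x\!>\!0$. This is a cosmetic omission, not a flaw in the method.
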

\begin{proof}
Suppose that $\texttt{\textbf{DCI}}$ holds in such a structure $\langle D;+,0,<,\mathcal{L}\rangle$. For proving $\texttt{\textbf{DCI}}_1$ suppose that for some formula $\varphi$ and  some $\epsilon,a\!\in\!D$ with $\epsilon\!\!>\!\!0$ we have
\begin{itemize}\itemindent=1.75em
\item[(i)] $\forall y\!\!\leqslant\!\!a\,\varphi(y)$ and

\item[(ii)] $\forall x\big(\varphi(x)\rightarrow\forall y\,[x\!\!\leqslant\!\!y\!\!\leqslant\!\!x\!+\!\epsilon
\rightarrow\varphi(y)]\big)$.
\end{itemize}

\noindent
Then  (1)~$\forall y\!\!<\!\!a\,\varphi(y)$ holds and we show that
(2)~$\forall x\big[\forall y\!\!<\!\!x\,\varphi(y)\rightarrow\exists z\!\!>\!\!x\,\forall y\!\!<\!\!z\,\varphi(y)\big]$ holds, as follows: for some fixed $b\!\in\!D$ assume that $\forall y\!\!<\!\!b\,\varphi(y)$ holds. Then we have $\varphi(b\!-\!\frac{\epsilon}{2})$ and so (ii) implies that $\forall y\,[b\!-\!\frac{\epsilon}{2}\!\!\leqslant\!\!y
\!\!\leqslant\!\!b\!+\!\frac{\epsilon}{2}\rightarrow\varphi(y)]$. Whence, $\forall y\!\!<\!\!b\!+\!\frac{\epsilon}{2}\,\varphi(y)$ holds, which proves (2). So,  $\texttt{\textbf{DCI}}$  implies $\forall x\,\varphi(x)$ from (1) and (2).

Now, we show that $\texttt{\textbf{DCI}}_1$ holds in  the (ordered field of) rational numbers  $\mathbb{Q}$ but $\texttt{\textbf{DCI}}$ does not.
For proving $\mathbb{Q}\vDash\texttt{\textbf{DCI}}_1$ suppose that for some formula $\varphi$ and  some numbers $\epsilon,r\!\in\!\mathbb{Q}$ with $\epsilon\!\!>\!\!0$ we have
(i)~$\forall y\!\!\leqslant\!\!r\,\varphi(y)$ and
(ii)~$\forall x\big(\varphi(x)\rightarrow\forall y\,[x\!\!\leqslant\!\!y\!\!\leqslant\!\!x\!+\!\epsilon
\rightarrow\varphi(y)]\big)$. We show that $\forall x\,\varphi(x)$ holds in  $\mathbb{Q}$; if not, then  $\mathcal{A}\!=\!\{q\!\in\!\mathbb{Q}\mid \neg\varphi(q)\}$ is nonempty and bounded below by $r$. Let $\alpha\!=\!\inf \mathcal{A}(\!\in\!\mathbb{R})$. So,  there exists some $s\!\in\!\mathcal{A}$ with $\alpha\!\!<\!\!s\!\!<\!\!\alpha\!+\!\frac{\epsilon}{2}$, and  there exists some $t\!\in\!\mathbb{Q}$ with $\alpha\!-\!\frac{\epsilon}{2}\!\!<\!\!t\!\!<\!\!\alpha$.
By $t\!\!<\!\!\alpha\!=\!\inf \mathcal{A}$ we   have $t\!\not\in\!\mathcal{A}$ and so $\varphi(t)$   holds. Then by (ii) we should have   $\varphi(y)$ for all $y\!\in\![t,t\!+\!\epsilon]$, and in particular $\varphi(s)$, since we already have $t\!\!<\!\!\alpha\!\!<\!\!s\!\!<\!\!\alpha\!+\!\frac{\epsilon}{2}\!\!<\!\!t\!+\!\epsilon$;
which is a contradiction with $s\!\in\!\mathcal{A}$.
We now show that  $\texttt{\textbf{CI}}$ does not hold  in $\mathbb{Q}$. Let $\varphi(x)\!=\![x\!\!<\!\!0\vee x^2\!\!<\!\!2]$. Obviously, (1)~$\forall y\!\!<\!\!0\,\varphi(y)$ holds and we show that
(2)~$\forall x\big[\forall y\!\!<\!\!x\,\varphi(y)\rightarrow\exists \epsilon\!\!>\!\!0\,\forall y\!\!<\!\!x\!+\!\epsilon\,\varphi(y)\big]$ holds as well. But clearly $\forall x\,\varphi(x)$ does not hold since $\varphi(2)$ is not true; this will show that  $\texttt{\textbf{DCI}}$ is not true in $\mathbb{Q}$. For showing (2) fix an  $r\!\in\!\mathbb{Q}$ and assume that $\forall y\!\!<\!\!r\,\varphi(y)$.
We show $\exists \epsilon\!\!>\!\!0\,\forall y\!\!<\!\!r\!+\!\epsilon\,\varphi(y)$ by distinguishing the following three cases:
\begin{itemize}
\item[(I)] If $r\!\!<\!\!0$, then for $\epsilon\!=\!\!-\!\frac{r}{2}$ we have  $\forall y\!\!<\!\!r\!+\!\epsilon\,\varphi(y)$.

\item[(II)] If $r\!=\!0$, then for $\epsilon\!=\!1$ we have $\forall y\!\!<\!\!r\!+\!\epsilon\,\varphi(y)$.

\item[(III)] If $r\!\!>\!\!0$, then we have $r^2\!\!<\!\!2$, since $r^2\!=\!2$ is impossible (by $\sqrt{2}\!\not\in\!\mathbb{Q}$) and if $r^2\!\!>\!\!2$, then we cannot have $\forall y\!\!<\!\!r\,\varphi(y)$ because for
$y=\frac{r^2+2}{2r}$ we have $0\!\!<\!\!y\!\!<\!\!r$ and $y^2\!-\!2\!=\!(\frac{r^2-2}{2r})^2\!\!>\!\!0$.
Let $\epsilon\!=\!\min\{1,\frac{2-r^2}{2r+1}\}$; then  $0\!\!<\!\!\epsilon\!\!\leqslant\!\!1$. For showing $\forall y\!\!<\!\!r\!+\!\epsilon\,\varphi(y)$, take some $y$ with  $0\!\!\leqslant\!\!y\!\!<\!\!r\!+\!\epsilon$. Then 
\newline\centerline{$y^2\!-\!2\!\!<\!\!(r\!+\!\epsilon)^2\!-\!2
\!=\!(r^2\!-\!2)\!+\!\epsilon(2r\!+\!\epsilon)
\!\!\leqslant\!\!(r^2\!-\!2)\!+\!\epsilon(2r\!+\!1)
\!\!\leqslant\!\!(r^2\!-\!2)\!+\!(2\!-\!r^2)\!=\!0.$}
So, $y^2\!\!<\!\!2$, thus $\varphi(y)$ holds for any $y\!\!<\!\!r\!+\!\epsilon$.
 \end{itemize}
 \vspace{-1.75em}
 \end{proof}

Therefore, $\texttt{\textbf{CI}}_1$ cannot be regarded as a genuine ``continuous induction''; even though it is a kind of ``non-discrete induction'' since it does not hold in $\mathbb{Z}$ or $\mathbb{N}$. But since it holds in $\mathbb{Q}$ then it is not really an ``induction on the continuum''.
In the next section we will  see the real reason  for regarding  $\texttt{\textbf{CI}}$ (and also its equivalent $\texttt{\textbf{CI}}_2$)  as a genuine ``continuous induction'', and not regarding $\texttt{\textbf{CI}}_1$ as a true ``induction on the continuum''.

\section{First-Order Completeness Axioms for the Field of Real Numbers}\label{sec:fo}
In Mathematical Analysis, $\mathbb{R}$ is usually introduced as a Complete Ordered Field; indeed by a theorem of Dedekind  there exists  only one complete ordered field up to isomorphism. In most of the  textbooks on Mathematical Analysis, the existence of a complete ordered field is assumed as an axiom;
 in some other textbooks, a complete ordered field is constructed from $\mathbb{Q}$ either by Dedekind Cuts
 or by Cauchy Sequences.
 The most usual Completeness Axiom in the analysis textbooks is the existence of supremum for any non-empty and bounded above subset; its first-order version is as follows:

\begin{definition}[$\texttt{\textbf{D-Sup}}$]\label{def:sup}

\noindent
Let $\texttt{\textbf{D-Sup}}$ (Definable Supremum Property) be the following first-order scheme
$$\exists x\,\varphi(x)\,\wedge\,
\exists y\,\forall x\,[\varphi(x)\rightarrow x\!\!\leqslant\!\!y]  \longrightarrow\exists z\,\forall y\,\big(\forall x\,[\varphi(x)\rightarrow x\!\!\leqslant\!\!y] \leftrightarrow z\!\!\leqslant\!\!y  \big)$$
where $\varphi$ is an arbitrary formula.
\hfill\ding{71}\end{definition}

Informally, $\texttt{\textbf{D-Sup}}$ says that for a nonempty ($\exists x\,\varphi(x)$) and bounded from above set (for some $y$ we have $\forall x\,[\varphi(x)\rightarrow x\!\!\leqslant\!\!y]$)  there exists a least upper bound (for some $z$, any $y$ is an upper bound if and only if $y\!\!\geqslant\!\!z$).
A dual statement is the principle of the existence of infimum for any nonempty and bounded from below set:

\begin{definition}[$\texttt{\textbf{D-Inf}}$]\label{def:inf}

\noindent
Let $\texttt{\textbf{D-Inf}}$ be the   first-order scheme
$$\exists x\,\varphi(x)\,\wedge\,
\exists y\,\forall x\,[\varphi(x)\rightarrow y\!\!\leqslant\!\!x]  \longrightarrow\exists z\,\forall y\,\big(\forall x\,[\varphi(x)\rightarrow y\!\!\leqslant\!\!x] \leftrightarrow y\!\!\leqslant\!\!z  \big)$$
for arbitrary formula $\varphi$.
\hfill\ding{71}\end{definition}

The scheme $\texttt{\textbf{D-Inf}}$ is used by Tarski (1940)  for presenting a complete first-order axiomatic system for the ordered field of real numbers  (see   \cite{epstein}). The usual real analytic proof for the equivalence of these two schemes works in  first-order logic as well:

\begin{theorem}[$\texttt{\textbf{D-Sup}}
\Longleftrightarrow\texttt{\textbf{D-Inf}}$]
\label{prop:supinf}

\noindent
In any  linear order  $\langle D;<\rangle$, the scheme \textup{$\texttt{\textbf{D-Sup}}$} holds, if and only if \textup{$\texttt{\textbf{D-Inf}}$} holds.
\end{theorem}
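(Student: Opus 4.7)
The plan is to mimic the classical real-analysis argument: the infimum of a nonempty bounded-below set coincides with the supremum of the set of its lower bounds. Since the excerpt requires only a linear order (no arithmetic), everything must be expressed purely in terms of $<$ and the given formula $\varphi$.

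For the direction $\texttt{\textbf{D-Sup}}\Longrightarrow\texttt{\textbf{D-Inf}}$, I would start from a formula $\varphi$ satisfying the hypotheses of $\texttt{\textbf{D-Inf}}$, namely $\exists x\,\varphi(x)$ and $\exists y\,\forall x[\varphi(x)\!\to\! y\!\leqslant\! x]$. Define the auxiliary formula
\[\psi(y)\;\equiv\;\forall x\,[\varphi(x)\rightarrow y\!\leqslant\! x],\]
which picks out the lower bounds of $\varphi$. The first hypothesis on $\varphi$ gives $\exists y\,\psi(y)$ (nonemptiness of $\psi$), and picking any witness $x_0$ of $\varphi(x_0)$ shows that $\forall y\,[\psi(y)\!\to\! y\!\leqslant\! x_0]$, so $\psi$ is bounded above. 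Apply $\texttt{\textbf{D-Sup}}$ to $\psi$ to obtain an element $z$ satisfying $\forall y\bigl(\forall u[\psi(u)\!\to\! u\!\leqslant\! y]\leftrightarrow z\!\leqslant\! y\bigr)$.

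The remaining job is to verify that this $z$ witnesses $\texttt{\textbf{D-Inf}}$ for $\varphi$, i.e.\ $\forall y\bigl(\psi(y)\leftrightarrow y\!\leqslant\! z\bigr)$. Instantiating the $\texttt{\textbf{D-Sup}}$ output at $y\!=\!z$ shows that $z$ is itself an upper bound of $\psi$, which gives the $(\Rightarrow)$ direction immediately. For $(\Leftarrow)$, one first establishes $\psi(z)$: for any $x$ with $\varphi(x)$, the element $x$ is an upper bound of $\psi$ by the very definition of $\psi$, hence $z\!\leqslant\! x$ by the $\texttt{\textbf{D-Sup}}$ characterization; so $\forall x[\varphi(x)\!\to\! z\!\leqslant\! x]$, that is, $\psi(z)$. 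Then $y\!\leqslant\! z$ together with $\psi(z)$ yields $y\!\leqslant\! x$ for every $x$ satisfying $\varphi(x)$, i.e.\ $\psi(y)$, by transitivity of $<$.

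The converse $\texttt{\textbf{D-Inf}}\Longrightarrow\texttt{\textbf{D-Sup}}$ is entirely symmetric: given a nonempty, bounded-above $\varphi$, apply $\texttt{\textbf{D-Inf}}$ to the formula $\chi(y)\equiv\forall x[\varphi(x)\!\to\! x\!\leqslant\! y]$ defining the upper bounds of $\varphi$, and show that its infimum is the sought supremum of $\varphi$.

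I do not expect any genuine obstacle; the sole technical point is to confirm that each auxiliary notion (``lower bound'', ``upper bound of lower bounds'', etc.) is expressible as a first-order formula with parameters, and that the biconditional in the conclusion of $\texttt{\textbf{D-Sup}}$ (resp.\ $\texttt{\textbf{D-Inf}}$) is strong enough to recover the analogous biconditional for the dual notion. This bookkeeping is the only place where one needs to be careful; everything else is linearity plus transitivity of $<$.
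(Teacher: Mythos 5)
Your proposal is correct and follows essentially the same route as the paper: both apply $\texttt{\textbf{D-Sup}}$ to the definable set of lower bounds of $\varphi$ (the paper writes this auxiliary formula as $\forall y\!<\!x\,\neg\varphi(y)$, which is equivalent in a linear order to your $\forall x[\varphi(x)\rightarrow y\leqslant x]$) and then checks that its supremum is the desired infimum. The only cosmetic difference is in the backward implication, where you first establish $\psi(z)$ directly and use transitivity, while the paper argues by contraposition through the supremum characterization; both verifications are sound.
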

\begin{proof}
We prove only  $\texttt{\textbf{D-Sup}}
\Longrightarrow\texttt{\textbf{D-Inf}}$; its converse can be proved by a dual argument. Suppose that for some   $\varphi(x)$ and $a,b\!\in\!D$ we have (i)~$\varphi(a)$ and (ii)~$\forall x\,[\varphi(x)\rightarrow b\!\!\leqslant\!\!x]$.  For using $\texttt{\textbf{D-Sup}}$,  let $\psi$ be the formula $\psi(x)\!=\!\forall y\!\!<\!\!x\,\neg\varphi(y)$. Then, by (ii), we have (1)~$\psi(b)$ and by (i) we have (2)~$\forall x\,[\psi(x)\rightarrow x\!\!\leqslant\!\!a]$. Now,  by $\texttt{\textbf{D-Sup}}$ there exists some $c\!\in\!D$ such that
\begin{equation}\label{eq:sup}
\forall y\,\big(\forall x\,[\psi(x)\rightarrow x\!\!\leqslant\!\!y] \leftrightarrow c\!\!\leqslant\!\!y  \big).
\end{equation}
We show  $\forall z\,\big(\forall x\,[\varphi(x)\rightarrow z\!\!\leqslant\!\!x] \leftrightarrow z\!\!\leqslant\!\!c\big)$. For every     $z\!\in\!D$ we have:
(I) If $\forall x\,[\varphi(x)\rightarrow z\!\!\leqslant\!\!x]$, then $\psi(z)$; from \eqref{eq:sup},  for $y\!=\!c$,  we have  $\forall x\,[\psi(x)\rightarrow x\!\!\leqslant\!\!c]$, and so   $z\!\!\leqslant\!\!c$.
(II) If $z\!\!\leqslant\!\!c$, then for any $x\!\!<\!\!z$ we have $c\!\not\leqslant\!x$ and so \eqref{eq:sup} implies that for some $u$ we have $\psi(u)$ and $u\!\not\leqslant\!x$. Now, $\psi(u)$ and $x\!\!<\!\!u$ imply that $\neg\varphi(x)$. Thus, $\forall x\!\!<\!\!z\,\neg\varphi(x)$, or equivalently,
 $\forall x\,[\varphi(x)\rightarrow z\!\!\leqslant\!\!x]$.
  \end{proof}

Another completeness principle is the so called Dedekind's Axiom;
which says that there is no proper cut (with a gap) in $\mathbb{R}$. The following is a first-order writing of an equivalent form of this axiom:

\begin{definition}[$\texttt{\textbf{D-Cut}}$]
\label{def:cut}

\noindent
Let $\texttt{\textbf{D-Cut}}$ denote  the first-order scheme
$$\exists x\,\exists y\,[\varphi(x)\wedge\psi(y)] \,\wedge\, \forall x\,\forall  y\,[\varphi(x)\wedge\psi(y)\rightarrow x\!\!<\!\!y]\longrightarrow \exists z\,\forall x\,\forall y\,[\varphi(x)\wedge\psi(y)\rightarrow x\!\!\leqslant\!\!z\!\!\leqslant\!\!y]$$
where  $\varphi$ and $\psi$ are arbitrary formulas.
\hfill\ding{71}\end{definition}

Tarski (1941,1946) used this axiom scheme for presenting another complete first-order axiomatic system for $\mathbb{R}$  (see \cite{epstein}).  Indeed, in  linearly ordered sets,  the axiom  scheme  $\texttt{\textbf{D-Cut}}$ is equivalent with the other completeness axioms schemes of $\texttt{\textbf{D-Sup}}$ and $\texttt{\textbf{D-Inf}}$:

\begin{theorem}[$\texttt{\textbf{D-Sup}}
\Longleftrightarrow\texttt{\textbf{D-Cut}}$]
\label{prop:cutsup}

\noindent
In any   linear order   $\langle D;<\rangle$, the scheme  \textup{$\texttt{\textbf{D-Sup}}$} holds, if and only if  \textup{$\texttt{\textbf{D-Cut}}$} holds.
\end{theorem}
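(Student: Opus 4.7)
The plan is to handle the two directions separately, with the forward direction being essentially immediate and the reverse direction requiring a short case split on whether $\varphi$ attains its supremum. For $\texttt{\textbf{D-Sup}}\Longrightarrow\texttt{\textbf{D-Cut}}$: given formulas $\varphi,\psi$ satisfying the hypothesis of $\texttt{\textbf{D-Cut}}$, I will observe that $\varphi$ is nonempty and is bounded above by any element $b$ with $\psi(b)$ (because the strict inequality of $\texttt{\textbf{D-Cut}}$'s hypothesis in particular yields weak inequality). Invoking $\texttt{\textbf{D-Sup}}$ produces a least upper bound $z$ of $\varphi$. Then $x\leqslant z$ for every $x$ with $\varphi(x)$ (since $z$ is an upper bound), and $z\leqslant y$ for every $y$ with $\psi(y)$ (since any such $y$ is itself an upper bound of $\varphi$, and $z$ is the least one); this is exactly the conclusion of $\texttt{\textbf{D-Cut}}$.

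For the converse, fix a formula $\varphi$ that is nonempty (witnessed by $a$) and bounded above (by $b$), and split on whether $\varphi$ has a maximum. If some $m$ satisfies $\varphi(m)\wedge\forall x[\varphi(x)\rightarrow x\leqslant m]$, then $m$ itself witnesses $\texttt{\textbf{D-Sup}}$: any $y$ is an upper bound of $\varphi$ iff $m\leqslant y$. Otherwise, I will apply $\texttt{\textbf{D-Cut}}$ to the auxiliary formulas $\Lambda(x)\equiv\exists w[\varphi(w)\wedge x<w]$ (informally ``$x$ is not an upper bound of $\varphi$'') and $\Upsilon(y)\equiv\forall x[\varphi(x)\rightarrow x\leqslant y]$ (informally ``$y$ is an upper bound''). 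Both are nonempty: $\Upsilon(b)$ holds by assumption, while $\Lambda(a)$ holds precisely because $\varphi$ has no maximum, so that some $w\in\varphi$ lies strictly above $a$. The strict-inequality hypothesis of $\texttt{\textbf{D-Cut}}$ follows by chaining: $\Lambda(x)$ supplies a $w\in\varphi$ with $w>x$, and $\Upsilon(y)$ forces $w\leqslant y$, so $x<w\leqslant y$.

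Invoking $\texttt{\textbf{D-Cut}}$ yields a $z$ with $\Lambda(x)\wedge\Upsilon(y)\rightarrow x\leqslant z\leqslant y$ for all $x,y$. It remains to verify that $z$ is the supremum: since $\varphi$ has no maximum, every $x$ with $\varphi(x)$ also satisfies $\Lambda(x)$, so $x\leqslant z$ and $z$ is an upper bound; conversely every upper bound $y$ satisfies $\Upsilon(y)$, hence $z\leqslant y$. Combined, this gives the biconditional ``$y$ is an upper bound of $\varphi$ iff $z\leqslant y$'' required by $\texttt{\textbf{D-Sup}}$. The main obstacle I anticipate is exactly the possible presence of a maximum in $\varphi$: such a maximum would be simultaneously in $\varphi$ and an upper bound, breaking the strict inequality that $\texttt{\textbf{D-Cut}}$ demands between its two classes, and would moreover make $\Lambda$ potentially miss some $\varphi$-elements. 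The preliminary case split on the existence of a maximum is inserted precisely to dispose of this degenerate scenario, leaving the clean dichotomy $\Lambda\mathbin{\dot\cup}\Upsilon$ of the whole order to carry the rest of the argument.
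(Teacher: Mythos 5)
Your proof is correct and takes essentially the same route as the paper's: the forward direction is identical, and the reverse direction uses the same preliminary case split on whether $\varphi$ attains a maximum, followed by an application of \texttt{\textbf{D-Cut}} to a definable ``lower class / upper-bound class'' pair. The only (immaterial) difference is that the paper's left class is $\varphi$ itself paired with $\psi(x)=\forall y\!\geqslant\!x\,\neg\varphi(y)$, whereas you take the larger class $\Lambda$ of non-upper-bounds; both choices yield the same supremum witness.
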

\begin{proof}
If $\texttt{\textbf{D-Sup}}$ holds in a linear order  $\langle D;<\rangle$, then for showing
$\texttt{\textbf{D-Cut}}$ assume that for    $\varphi(x),\psi(x)$ and   $a,b\!\in\!D$ we have (i)~$\varphi(a)\wedge\psi(b)$ and  (ii)~$\forall x\,\forall  y\,[\varphi(x)\wedge\psi(y)\rightarrow x\!\!<\!\!y]$; we show the existence of some $z\!\in\!D$ such that $\forall y\,[\varphi(x)\wedge\psi(y)\rightarrow x\!\!\leqslant\!\!z\!\!\leqslant\!\!y]$. Now, by (i), we have (1)~$\varphi(a)$ and, by (ii), we have (2)~$\forall x\,[\varphi(x)\rightarrow x\!\!<\!\!b]$. By $\texttt{\textbf{D-Sup}}$ there exists some $c\!\in\!D$ such that (3)~$\forall y\,\big(\forall x\,[\varphi(x)\rightarrow x\!\!\leqslant\!\!y] \leftrightarrow c\!\!\leqslant\!\!y  \big)$. We show that 
$\forall x\forall y\,[\varphi(x)\wedge\psi(y)\rightarrow x\!\!\leqslant\!\!c\!\!\leqslant\!\!y]$ holds. By (3), for $y\!=\!c$, we have $x\!\!\leqslant\!\!c$ for any $x$ with $\varphi(x)$. Now assume that $\psi(y)$; then if $c\!\not\leqslant\!y$, by (3), there should exist some $x$ such that $\varphi(x)$ but $y\!\!<\!\!x$. This contradicts (ii) since one the one hand we have $\varphi(x)\wedge\psi(y)$ but on the other hand $y\!\!<\!\!x$!

If $\texttt{\textbf{D-Cut}}$ holds in  a   linear order $\langle D;<\rangle$, then for showing
$\texttt{\textbf{D-Sup}}$ assume that for a formula $\varphi(x)$ and some $a,b\!\in\!D$ we have (i)~$\varphi(a)$ and  (ii)~$\forall x\,[\varphi(x)\rightarrow x\!\!\leqslant\!\!b]$; we show the existence of some $z\!\in\!D$ such that $\forall y\,\big(\forall x\,[\varphi(x)\rightarrow x\!\!\leqslant\!\!y] \leftrightarrow z\!\!\leqslant\!\!y\big)$.
We distinguish two cases:

(I) For some $d\!\in\!D$ we have $\varphi(d)\wedge\forall x [\varphi(x)\rightarrow x\!\!\leqslant\!\!d]$.  In this case, it is easy to show that $\forall y\,\big(\forall x\,[\varphi(x)\rightarrow x\!\!\leqslant\!\!y] \leftrightarrow d\!\!\leqslant\!\!y\big)$ holds, since for any $y$ we have $y\!\!<\!\!d$ if and only if for some $x$ (which is $x\!=\!d$) we have $x\!\!>\!\!y$ and $\varphi(x)$.

(II) We have $\forall y\big[\forall x[\varphi(x)\rightarrow x\!\!\leqslant\!\!y]\rightarrow\neg\varphi(y)\big]$. In this case, put  $\psi(x)\!=\!\forall y\!\!\geqslant\!\!x\,\neg\varphi(y)$. Then, noting that by (ii) we already have $\neg\varphi(b)$, the relation $\psi(b)$ holds, and so by (i) we have  (1)~$\varphi(a)\wedge\psi(b)$. Also, (2)~$\forall x\,\forall  y\,[\varphi(x)\wedge\psi(y)\rightarrow x\!\!<\!\!y]$ holds by the definition of $\psi$.
Thus, by $\texttt{\textbf{D-Cut}}$, there exists some $c\!\in\!D$ such that (3)~$\forall x\,\forall y\,[\varphi(x)\wedge\psi(y)\rightarrow x\!\!\leqslant\!\!c\!\!\leqslant\!\!y]$. We show that this $c$ satisfies $\forall y\,\big(\forall x\,[\varphi(x)\rightarrow x\!\!\leqslant\!\!y] \leftrightarrow c\!\!\leqslant\!\!y  \big)$.

First,  assume that $\forall x\,[\varphi(x)\rightarrow x\!\!\leqslant\!\!y]$; then $\neg\varphi(y)$ holds by the assumption (II).
So, $\psi(y)$ holds and therefore $c\!\!\leqslant\!\!y $ by (3).

Second, assume that $c\!\!\leqslant\!\!y$ but $\varphi(x)\wedge x\!\not\leqslant\!y $ for some $x$. Then by (3), and $\varphi(x)$,  we should have $x\!\!\leqslant\!\!c$ but this contradicts $c\!\!\leqslant\!\!y\!\!<\!\!x$!
 \end{proof}

Finally, we observe that continuous induction is equivalent to (any of) the (above) completeness axiom(s), even in first-order logic:

\begin{theorem}[$\texttt{\textbf{D-Inf}}
\Longleftrightarrow\texttt{\textbf{DCI}}$]
\label{prop:infind}

\noindent
In any  linear order  $\langle D;<\rangle$, the scheme \textup{$\texttt{\textbf{D-Inf}}$} holds, if and only if \textup{$\texttt{\textbf{DCI}}$} holds.
\end{theorem}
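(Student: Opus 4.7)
My plan is to prove each direction by a contradiction argument that, in each case, identifies the boundary point of a definable set with the completeness-theoretic quantity supplied by the other principle.

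For $\texttt{\textbf{D-Inf}}\Longrightarrow\texttt{\textbf{DCI}}$, I will assume the antecedent of $\texttt{\textbf{DCI}}$ for a formula $\varphi$---so in particular $\forall y\!<\!a\,\varphi(y)$ holds for some $a$---and suppose towards a contradiction that $\neg\varphi(c)$ for some $c$. The formula $\neg\varphi(y)$ is then nonempty and bounded below by $a$ (clause~(i) of $\texttt{\textbf{DCI}}$ forces every such $c$ to satisfy $c\!\geqslant\! a$), so $\texttt{\textbf{D-Inf}}$ supplies an infimum $m$. From the defining biconditional of $m$ every $y\!<\!m$ satisfies $\varphi(y)$, so clause~(ii) of $\texttt{\textbf{DCI}}$ yields some $z\!>\!m$ with $\forall y\!<\!z\,\varphi(y)$. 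But $z\!>\!m$ means $z$ is not a lower bound of $\{y:\neg\varphi(y)\}$, so some $c\!<\!z$ satisfies $\neg\varphi(c)$; this contradicts $\forall y\!<\!z\,\varphi(y)$.

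For $\texttt{\textbf{DCI}}\Longrightarrow\texttt{\textbf{D-Inf}}$, given a nonempty $\varphi$ (witness $a$, so $\varphi(a)$) bounded below by $b$, I will apply $\texttt{\textbf{DCI}}$ contrapositively to the auxiliary formula $\theta(x)\equiv\forall y[\varphi(y)\rightarrow x\!<\!y]$ (``$x$ is a strict lower bound of $\varphi$''). Clause~(i) of $\texttt{\textbf{DCI}}$ holds for $\theta$ because $\forall y\!<\!b\,\theta(y)$: if $\varphi(z)$ then $z\!\geqslant\!b\!>\!y$. Meanwhile $\forall x\,\theta(x)$ fails, since $\theta(a)$ would require $a\!<\!a$, so $\texttt{\textbf{DCI}}$ forces clause~(ii) for $\theta$ to break at some witness $x_0$: thus $\forall y\!<\!x_0\,\theta(y)$ holds but for every $z\!>\!x_0$ some $y\!<\!z$ satisfies $\neg\theta(y)$. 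I then claim that $x_0$ is the desired infimum. Lower-bound half: if $\varphi(w)$ with $w\!<\!x_0$ then $\theta(w)$ would give $w\!<\!w$, a contradiction. Greatest-lower-bound half: if $u\!>\!x_0$ were a second lower bound, instantiating the failure of (ii) with $z\!=\!u$ produces $y\!<\!u$ with $\neg\theta(y)$, hence some $w$ with $\varphi(w)\wedge w\!\leqslant\!y\!<\!u$, contradicting $u\!\leqslant\!w$.

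The main obstacle is the second direction, where the novelty is using $\texttt{\textbf{DCI}}$ in contrapositive form rather than by direct instantiation: the infimum is extracted from the place where clause~(ii) breaks. Since the underlying structure is only a linear order, with no arithmetic and no density to fall back on, the choice of $\theta$ with strict inequality is essential, as it ensures that the boundary point isolated by the failing clause coincides with the infimum of $\{y:\varphi(y)\}$ whether or not that infimum is attained in the set. Once $x_0$ is in hand, verifying the biconditional in the conclusion of $\texttt{\textbf{D-Inf}}$ reduces to the pair of contrapositives indicated above.
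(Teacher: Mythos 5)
Your proof is correct. The first direction is essentially the paper's own argument: take the $\texttt{\textbf{D-Inf}}$-infimum of the definable set of counterexamples to $\varphi$, note that everything below it satisfies $\varphi$, and use clause~(ii) of $\texttt{\textbf{DCI}}$ to push past it into the counterexample set. The second direction, however, takes a genuinely different route. The paper assumes for contradiction that no infimum exists and then verifies both hypotheses of $\texttt{\textbf{DCI}}$ for the complement formula $\neg\varphi$, concluding $\forall x\,\neg\varphi(x)$ and contradicting nonemptiness; the bookkeeping needed to extract clause~(ii) from the negated infimum condition is somewhat delicate there. You instead apply $\texttt{\textbf{DCI}}$ contrapositively to $\theta(x)\equiv\forall y\,[\varphi(y)\rightarrow x\!<\!y]$, the formula expressing that $x$ is a strict lower bound: clause~(i) holds for $\theta$ while $\forall x\,\theta(x)$ fails (since $\theta(a)$ would give $a\!<\!a$), so clause~(ii) must break at some $x_0$, and you check directly that this breakdown point satisfies the biconditional in the conclusion of $\texttt{\textbf{D-Inf}}$. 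Both halves of that check are sound: $w\!<\!x_0\wedge\varphi(w)$ gives $w\!<\!w$ via $\theta(w)$, and a putative lower bound $u\!>\!x_0$ yields, from the failure of clause~(ii) at $z\!=\!u$, some $w\!\leqslant\!y\!<\!u$ with $\varphi(w)$, contradicting $u\!\leqslant\!w$. Your use of the strict inequality in $\theta$ is indeed what makes the argument indifferent to whether the infimum is attained. What this buys is that the infimum is exhibited explicitly as the point where the induction clause fails rather than inferred by reductio from its assumed nonexistence; the cost is essentially nil, since $\theta$ is a legitimate instance of the scheme.
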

\begin{proof}
Suppose that $\texttt{\textbf{D-Inf}}$ holds in the   linear order   $\langle D;<\rangle$. For proving  \texttt{\textbf{DCI}},  assume that for  $\varphi$ and some $a\!\in\!D$ we have (i)~$\forall y\!\!<\!\!a\,\varphi(y)$ and (ii)~$\forall x\big[\forall y\!\!<\!\!x\,\varphi(y)\rightarrow\exists z\!\!>\!\!x\,\forall y\!\!<\!\!z\,\varphi(y)\big]$; then we show that $\forall x\,\varphi(x)$. If for some $b\!\in\!D$ we had $\neg\varphi(b)$, then put $\varphi'(x)\!=\!\neg\varphi(x)$. Now we have (1)~$\varphi'(b)$ and (2)~$\forall x\,[\varphi'(x)\rightarrow a\!\!\leqslant\!\!x]$ by (i).
By the assumption $\texttt{\textbf{D-Inf}}$ there exists some element $c\!\in\!D$ for which we have

$\forall y\,\big(\forall x\,[\varphi'(x)\rightarrow y\!\!\leqslant\!\!x] \leftrightarrow y\!\!\leqslant\!\!c  \big)$;

\noindent
or, equivalently, (3)~$\forall y\,[y\!\!\leqslant\!\!c  \leftrightarrow \forall x\!\!<\!\!y\,\varphi(x)]$. Thus, $\forall x\!\!<\!\!c\,\varphi(x)$ and so by (ii) there exists some $d\!\in\!D$ such that $d\!\!>\!\!c$ and $\forall y\!\!<\!\!d\,\varphi(y)$. By (3), since  $d\!\!>\!\!c$,  there exists some $x\!\!<\!\!d$ such that $\neg\varphi(x)$. This is a contradiction, since we had $\forall y\!\!<\!\!d\,\varphi(y)$.

Now, suppose that $\texttt{\textbf{DCI}}$ holds in a linear order $\langle D;<\rangle$; for proving $\texttt{\textbf{D-Inf}}$ assume that we have  (i)~$\varphi(a)$ and (ii)~$\forall x\,[\varphi(x)\rightarrow b\!\!\leqslant\!\!x]$ for some formula $\varphi$ and some $a,b\!\in\!D$. We aim at showing the existence of some $z\!\in\!D$ such that (iii)~$\forall y\,\big(\forall x\,[\varphi(x)\rightarrow y\!\!\leqslant\!\!x] \leftrightarrow y\!\!\leqslant\!\!z  \big)$. Assume that for no $z$ the condition (iii) holds; so for any $z$ there exists some $z'$ such that (iv)~$\neg[z'\!\!\leqslant\!\!z\leftrightarrow\forall x\!\!<\!\!z'\neg\varphi(x)]$. Now, for the formula $\varphi'(x)\!=\!\neg\varphi(x)$ we have (1)~$\forall x\!\!<\!\!b\,\varphi'(x)$ by (ii), and we show that the condition

(2)~$\forall x\big[\forall y\!\!<\!\!x\,\varphi'(y)\rightarrow\exists z\!\!>\!\!x\,\forall y\!\!<\!\!z\,\varphi'(y)\big]$

\noindent
holds: for any fixed $x_0$ if $\forall y\!\!<\!\!x_0\,\varphi'(y)$, then (3)~$\forall x\!\!<\!\!x_0\,\neg\varphi(x)$. There exists some $z_0$ such that (iv) holds (for $z'=z_0$); if $z_0\!\!\leqslant\!\!x_0$,  then $\neg\forall x\!\!<\!\!z_0\neg\varphi(x)$ or $\exists x\!\!<\!\!z_0\varphi(x)$ which contradicts (3).
So, we have $x_0\!\!<\!\!z_0$; by (iv) we should have  $\forall x\!\!<\!\!z_0\varphi'(x)$. Whence, (2) holds, and we can apply $\texttt{\textbf{DCI}}$; which implies that $\forall x\,\varphi'(x)$ and this contradicts   (i). Therefore, for some $z\!\in\!D$, (iii) should hold.
  \end{proof}

It is proved in  \cite[Theorems~3.1,3.2]{kalantari}
that  continuous induction  $\texttt{\textbf{CI}}$ is equivalent to the   infimum (greatest lower bound) property  in ordered fields; also, $\texttt{\textbf{CI}}$ is proved in \cite[Theorem~1]{jing} to be equivalent with Dedekind's Axiom (the nonexistence of cuts with gaps).
We now show that the principle of  continuous induction in Definition~\ref{def:ind''} ($\texttt{\textbf{CI}}_1$) is actually equivalent to the Archimedean property in ordered abelian groups.

\begin{definition}[Archimedean Property, $\texttt{\textbf{AP}}$]\label{def:arch}

\noindent
An ordered abelian group $\langle G;+,0,<\rangle$ has the Archimedean property when for any $a,b\!\in\!G$ with $a\!\!>\!\!0$ there exists some $n\!\in\!\mathbb{N}$ such that $b\!\!<\!\!n\centerdot a$, where $n\centerdot a\!=\!\underbrace{a+\cdots+a}_{n-\text{times}}$.
\hfill\ding{71}\end{definition}

\begin{theorem}[$\texttt{\textbf{AP}}
\iff\texttt{\textbf{CI}}_1$]\label{th:arch}

\noindent An ordered abelian group
  has the Archimedean property
  if and only if it
satisfies   \textup{$\texttt{\textbf{CI}}_1$}.
\end{theorem}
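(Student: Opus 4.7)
The plan is to prove both implications directly. The forward direction $\texttt{\textbf{AP}}\Longrightarrow\texttt{\textbf{CI}}_1$ is obtained by iterating clause~(2) to cover arbitrarily large intervals above $a$ and then invoking Archimedeanness to reach any target point. The reverse direction $\texttt{\textbf{CI}}_1\Longrightarrow\texttt{\textbf{AP}}$ is by contradiction: a counterexample to $\texttt{\textbf{AP}}$ will be turned into a proper subset of $G$ satisfying both hypotheses of $\texttt{\textbf{CI}}_1$.

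For $\texttt{\textbf{AP}}\Longrightarrow\texttt{\textbf{CI}}_1$, fix $S\subseteq G$ together with a witness $a\in G$ for~(1) and a witness $\epsilon>0$ for~(2). The key step is a metatheoretic induction on $n\in\mathbb{N}$ showing that $[a,\,a+n\!\cdot\!\epsilon]\subseteq S$ for every $n$: the base case follows from $a\in\,]\!-\!\infty,a]\subseteq S$, and the inductive step combines $a+n\!\cdot\!\epsilon\in S$ with clause~(2) to adjoin $[a+n\!\cdot\!\epsilon,\,a+(n{+}1)\!\cdot\!\epsilon]$. Given any $b\in G$, either $b\leq a$ (so $b\in S$ already by~(1)) or $b>a$, in which case the Archimedean property applied to $\epsilon$ and $b-a$ yields some $n\in\mathbb{N}$ with $b-a<n\!\cdot\!\epsilon$, placing $b$ inside $[a,\,a+n\!\cdot\!\epsilon]\subseteq S$. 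Hence $S=G$.

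For $\texttt{\textbf{CI}}_1\Longrightarrow\texttt{\textbf{AP}}$, suppose toward a contradiction that there exist $a>0$ and $b\in G$ with $n\!\cdot\! a\leq b$ for every $n\in\mathbb{N}$. Define
\[S \;=\; \{x\in G : x<n\!\cdot\! a \text{ for some } n\in\mathbb{N}\}.\]
I verify the two hypotheses of $\texttt{\textbf{CI}}_1$ for this $S$: every $x\leq a$ satisfies $x<2\!\cdot\! a$ (using $a>0$), hence $]\!-\!\infty,a]\subseteq S$; and with $\epsilon=a$, if $x<n\!\cdot\! a$ then every $y\in[x,x+a]$ satisfies $y\leq x+a<(n{+}1)\!\cdot\! a$, so $[x,x+a]\subseteq S$. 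Applying $\texttt{\textbf{CI}}_1$ would force $S=G$, but $b\notin S$ by the failure of the Archimedean property---a contradiction.

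Neither direction presents a serious obstacle; the point requiring slight care is that the inductions on $n\in\mathbb{N}$ appearing in both arguments are purely \emph{metatheoretic}, since $n\!\cdot\! a$ is itself defined by external induction on the natural number $n$ and no internal induction principle is available (or needed) in the language $\{+,0,<\}$ of ordered abelian groups. This is exactly why the equivalence can be asserted unconditionally of every ordered abelian group, without any additional first-order axioms.
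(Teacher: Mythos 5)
Your proof is correct and follows essentially the same route as the paper's: the forward direction via an external induction showing the intervals $[a,a+n\!\cdot\!\epsilon]$ (the paper uses $]\!-\!\infty,a+n\!\cdot\!\epsilon]$) lie in $S$ and then invoking Archimedeanness, and the reverse direction via the same set $\{x: x<n\!\cdot\!a \text{ for some } n\}$ with $\epsilon=a$. The only cosmetic differences are that the paper argues the reverse direction directly rather than by contradiction, and your closing remark that the inductions are metatheoretic is a nice point the paper leaves implicit.
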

\begin{proof}
Suppose that the ordered abelian group $\langle G;+,0,<\rangle$ has the Archimedean property ($\texttt{\textbf{AP}}$) and
we have
(i)~$]\!-\infty,a]\subseteq\mathcal{A}$ and
(ii)~for any $x\!\in\!G$, if $x\!\in\!\mathcal{A}$, then $[x,x\!+\!\epsilon]\subseteq\mathcal{A}$;
for a subset $\mathcal{A}\subseteq G$ and  some $\epsilon,a\!\in\!G$ with $\epsilon\!\!>\!\!0$.
By induction on $n\!\in\!\mathbb{N}$ we can show that (iii)~$]\!-\infty,a\!+\!n\centerdot\epsilon]\subseteq\mathcal{A}$:
for $n\!=\!0$ it follows from (i) and the induction step
follows from (ii). For an arbitrary $x\!\in\!G$, by $\texttt{\textbf{AP}}$ there exists some $n\!\in\!\mathbb{N}$ such that $x\!-\!a\!\!<\!\!n\centerdot\epsilon$;  so we have $x\!\in\!\mathcal{A}$ by (iii). Whence, $\mathcal{A}\!=\!G$; and so $\texttt{\textbf{CI}}_1$ holds in $G$.

Now, suppose that the ordered abelian group $\langle G;+,0,<\rangle$ satisfies $\texttt{\textbf{CI}}_1$. For any $a,b\!\in\!G$ with $a\!\!>\!\!0$ let $\mathcal{B}\!=\!\{x\!\in\!G\mid \exists n\!\in\!\mathbb{N}\!:x\!\!<\!\!n\centerdot a\}$. Then we obviously have (i)~$]\!-\infty,0]\subseteq\mathcal{B}$; we show that (ii)~for any $x\!\in\!G$, if $x\!\in\!\mathcal{B}$, then $[x,x\!+\!a]\subseteq\mathcal{B}$. For any $x\!\in\!\mathcal{B}$ we have $x\!\!<\!\!m\centerdot a$ for some $m\!\in\!\mathbb{N}$; so for any $y\!\in\![x,x\!+\!a]$ we have $y\!\!\leqslant\!\!x\!+\!a\!\!<\!\!(m\!+\!1)\centerdot a$, and so $y\!\in\!\mathcal{B}$. Thus, by $\texttt{\textbf{CI}}_1$, from (i) and (ii), we should have $\mathcal{B}\!=\!G$, and so for any $b\!\in\!S$ there exists some $n\!\in\!\mathbb{N}$ such that $b\!\!<\!\!n\centerdot a$; whence $G$ has $\texttt{\textbf{AP}}$.
  \end{proof}

Let us note that  Theorem~\ref{th:arch} gives another proof for  $\mathbb{Q}\vDash\texttt{\textbf{CI}}_1$ (and also $\mathbb{Q}\vDash\texttt{\textbf{DCI}}_1$) that was proved already in Theorem~\ref{th:indind''}.
So, $\texttt{\textbf{CI}}_1$ can be added to the list of 42 equivalent forms of the Archimedean property (of ordered fields)  in \cite{deteisman}. Whence, while the principle of  continuous induction in Definitions~\ref{def:ind} ($\texttt{\textbf{CI}}$) and \ref{def:ind'} ($\texttt{\textbf{CI}}_2$) are equivalent with  the completeness axiom (in the ordered fields), the principle $\texttt{\textbf{CI}}_1$ is equivalent with the Archimedean property (and is not equivalent with the completeness axiom) in such fields.

\section{Real Closed Fields, a First-Order Logical Study}\label{sec:rcf}

What makes the Real Closed Fields interesting in Mathematical Logic is Tarski's Theorem that,  ${\rm Th}(\langle\mathbb{R};+,0,-,<,\times,1\rangle)$, the complete first-order theory of the ordered field of real numbers is decidable and axiomatizable by the theory of real closed (ordered) fields (see e.g. \cite{kk}) . The real closed fields are studied thoroughly in Algebra  and Algebraic Geometry (see e.g. \cite{basu}); here we study some equivalent  axiomatizations of the first-order theory of real closed fields. The most usual definition of a real closed field is an ordered field which satisfies the following axiom scheme:

\begin{definition}[$\texttt{\textbf{RCF}}$]\label{def:rcf}

\noindent
Let $\texttt{\textbf{RCF}}$ be the following axiom scheme:
$$\forall x\!\!>\!\!0\,\exists y\,(y^2\!=\!x) \;\wedge\; \forall \{a_i\}_{i\leqslant 2n}\,\exists x\,  (x^{2n+1}\!+\!\sum_{i\leqslant 2n}a_ix^{i}\!=\!0),$$
for $n\!\in\!\mathbb{N}$ with $n\!\!>\!\!1$, together with the axioms of Ordered Fields ($\texttt{\textbf{OF}}$).
\hfill\ding{71}\end{definition}

By definition,  a real closed field is an ordered field in which every positive element has a square root and  every odd-degree polynomial has a root in it. This most usual definition of real closed fields is in fact the most inapplicable one, since many interesting theorems on and about real closed fields use other equivalent definitions. The most applicable (and the most fruitful) definition of real closed fields is the one which says that a real closed field is an ordered field in which the Intermediate Value Theorem (IVT) holds:

\begin{definition}[$\texttt{\textbf{IVT}}$]\label{def:ivt}

\noindent
Let $\texttt{\textbf{IVT}}$ be the  first-order scheme
$$\forall\mathfrak{p}\,\forall u,v\,\exists x\,  [u\!\!<\!\!v \wedge \mathfrak{p}(u)\!\cdot\!\mathfrak{p}(v)\!\!<\!\!0
\longrightarrow u\!\!<\!\!x\!\!<\!\!v \wedge \mathfrak{p}(x)\!=\!0],$$
where $\forall\mathfrak{p}$ stands for $\forall\{a\}_{i\leqslant m}$ for polynomial  $\mathfrak{p}(x)\!=\!\sum_{i\leqslant m} a_ix^{i}$.
\hfill\ding{71}\end{definition}

Let us note that the terms of the language of ordered fields $\{+,0,-,<,\times,1\}$ which contain a variable $x$ are equal to polynomials like $\mathfrak{p}(x)\!=\!\sum_{i\leqslant m}a_ix^i$ where $a_i$'s are some $x$-free terms.
For the usefulness of $\texttt{\textbf{IVT}}$ we first note that it implies   $\texttt{\textbf{RCF}}$:

\begin{theorem}[$\texttt{\textbf{IVT}}
\Longrightarrow\texttt{\textbf{RCF}}$]\label{prop:ivtrcf}

\noindent
In any ordered field that \textup{$\texttt{\textbf{IVT}}$} holds,
\textup{$\texttt{\textbf{RCF}}$} holds too.
\end{theorem}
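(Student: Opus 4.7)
The plan is to derive both clauses of $\texttt{\textbf{RCF}}$ from $\texttt{\textbf{IVT}}$ by exhibiting, for each of the two polynomials in question, explicit bounds $u<v$ at which the polynomial takes values of opposite signs. Let $F$ denote an ordered field in which $\texttt{\textbf{IVT}}$ holds.

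For the square-root axiom, fix $x>0$ and consider the polynomial $\mathfrak{p}(y)=y^2-x$. Taking $u=0$ gives $\mathfrak{p}(u)=-x<0$. For the upper endpoint I would distinguish $x\!\leqslant\!1$ (take $v=1$, so $\mathfrak{p}(v)=1-x\!\geqslant\!0$, treating the easy equality case separately) and $x>1$ (take $v=x$, so $\mathfrak{p}(v)=x^2-x=x(x-1)>0$); a single uniform choice like $v=x+1$, yielding $\mathfrak{p}(v)=(x+1)^2-x=x^2+x+1>0$, avoids the case split. Then $\mathfrak{p}(u)\cdot\mathfrak{p}(v)<0$ and $\texttt{\textbf{IVT}}$ yields some $y$ with $y^2=x$.

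For the odd-degree axiom, given $\mathfrak{p}(x)=x^{2n+1}+\sum_{i\leqslant 2n}a_i x^i$, the key observation is the standard dominance estimate: setting $M=1+\sum_{i\leqslant 2n}|a_i|$ (where $|\cdot|$ is definable in $\texttt{\textbf{OF}}$), one has $M\!\geqslant\!1$ and, for any $t$ with $|t|\!\geqslant\!M$, the inequality $|t|^i\!\leqslant\!|t|^{2n}$ for each $i\!\leqslant\!2n$, so
$$\Bigl|\sum_{i\leqslant 2n}a_i t^i\Bigr|\;\leqslant\;\Bigl(\sum_{i\leqslant 2n}|a_i|\Bigr)|t|^{2n}\;<\;M\,|t|^{2n}\;\leqslant\;|t|\cdot|t|^{2n}\;=\;|t|^{2n+1}.$$
Hence the sign of $\mathfrak{p}(t)$ agrees with the sign of $t^{2n+1}$, so $\mathfrak{p}(-M)<0<\mathfrak{p}(M)$. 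Taking $u=-M$ and $v=M$ in $\texttt{\textbf{IVT}}$ produces the required root.

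The main obstacle is the second part, and specifically the need to carry out the leading-term-dominance estimate purely inside an ordered field, with the coefficients $a_i$ treated as parameters. No analytic limit is invoked; everything is a finite chain of inequalities using only the ordered field axioms and the definability of absolute value, so the argument is uniform in the coefficients and fits the first-order scheme of $\texttt{\textbf{IVT}}$. The square-root case is essentially a warm-up and should be routine once the upper endpoint is chosen carefully.
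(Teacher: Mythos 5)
Your proposal is correct and follows essentially the same route as the paper's proof: a direct sign change at $0$ and a point beyond $x$ for the square root, and the leading-term dominance estimate at $\pm(1+\sum_{i\leqslant 2n}|a_i|)$ for the odd-degree case, exactly the bound the paper uses. The only differences are cosmetic (the paper takes the upper endpoint $a+\tfrac12$ for the quadratic and phrases the dominance bound as $\mathfrak{p}(\pm v)\gtrless \pm v^{2n}$ rather than via a strict absolute-value inequality).
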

\begin{proof}
For any $a\!\!>\!\!0$ let $\mathfrak{p}(x)\!=\!x^2\!-\!a$. Then we have $\mathfrak{p}(0)\!=\!-a\!\!<\!\!0\!\!<\!\!
a^2\!+\!\frac{1}{4}\!=\!
\mathfrak{p}(a\!+\!\frac{1}{2}),$  and so by $\texttt{\textbf{IVT}}$ for some $x$ with $0\!\!<\!\!x\!\!<\!\!a\!+\!\frac{1}{2}$ we have $\mathfrak{p}(x)\!=\!0$; thus, $x^2\!=\!a$.

We can write  any odd-degree polynomial  as $\mathfrak{p}(x)\!=\!x^{2n+1}\!+\!\sum_{i\leqslant 2n}a_ix^{i}$ by multiplying with an $x$-free term, if necessary. Let $v\!=\!1\!+\!\sum_{i\leqslant 2n}|a_i|$ and $u\!=\!-v$. Then $u\!\leqslant\!-1\!<\!0\!<\!1\!\leqslant\!v$, so $|u|,|v|\!\geqslant\!1$. Now,  by $u\!+\!\sum_{i}|a_i|\!=\!\!-\!1$ and the triangle inequality, we have

$\mathfrak{p}(u)\!\leqslant\!  u^{2n+1}\!+\!\sum_{i\leqslant 2n}|a_i||u|^{i}\!\leqslant\!u^{2n+1}\!+\!\sum_{i\leqslant 2n}|a_i|u^{2n}\!=\!u^{2n}(u\!+\!\sum_{i\leqslant 2n}|a_i|)
 \!=\!-u^{2n}\!\!<\!\!0,$ and

 $\mathfrak{p}(v)\!\geqslant\! v^{2n+1}\!-\!\sum_{i\leqslant 2n}|a_i||v|^{i}\!\geqslant\!v^{2n+1}\!-\!\sum_{i\leqslant 2n}|a_i|v^{2n}\!=\!v^{2n}(v\!-\!\sum_{i\leqslant 2n}|a_i|)\!=\!v^{2n}\!\!>\!\!0.$

\noindent
 Now, the desired conclusion follows from $\texttt{\textbf{IVT}}$ by $u\!\!<\!\!v$ and $\mathfrak{p}(u)\!\!<\!\!0\!\!<\!\!\mathfrak{p}(v)$.
 \end{proof}

As some other uses of the $\texttt{\textbf{IVT}}$ let us take a look at some real analytic theorems that are proved algebraically (cf. \cite{basu}).

\begin{definition}[Derivative]\label{def:der}

\noindent
The derivative of a polynomial $\mathfrak{p}(x)\!=\!\sum_{i=0}^na_ix^i$ is the polynomial $\mathfrak{p}'(x)\!=\!\sum_{i=1}^nia_ix^{i-1}$.
\hfill\ding{71}\end{definition}

\begin{remark}[Some Properties of  Derivatives]\label{rem:der}

\noindent
For polynomials $\mathfrak{p}(x)\!=\!\sum_{i\leqslant n}a_ix^i$ and $\mathfrak{q}(x)\!=\!\sum_{j\leqslant m}b_jx^j$ we have:
\begin{itemize}
\item $(a\mathfrak{p})'\!=\!a\mathfrak{p}'$ for a constant (i.e., an $x$-free term) $a$,
\item $(\mathfrak{p}\!+\!\mathfrak{q})'\!=\!\mathfrak{p}'\!+\!\mathfrak{q}'$, and
\item $(\mathfrak{p}\cdot\mathfrak{q})'
    \!=\!(\mathfrak{p}'\cdot\mathfrak{q})
    \!+\!(\mathfrak{p}\cdot\mathfrak{q}')$;
\end{itemize}
which can be verified rather easily. For example, the last item can be verified by noting that the coefficient of $x^{k\!-\!1}$ in $(\mathfrak{p}\cdot\mathfrak{q})'$ is $k\sum_{i+j=k}a_ib_j\!=\!\sum_{i=0}^kka_ib_{k\!-\!i}
\!=\!\sum_{i=0}^k[ia_ib_{k\!-\!i}\!+\!(k\!-\!i)a_ib_{k\!-\!i}]
\!=\![\sum_{i=0}^k(ia_i)b_{k\!-\!i}]
\!+\![\sum_{j=0}^ka_{k\!-\!j}(jb_{j})]$ in which the first summand is the coefficient of $x^{k\!-\!1}$ in $(\mathfrak{p}'\cdot\mathfrak{q})$ and the second summand is   the coefficient of $x^{k\!-\!1}$ in $(\mathfrak{p}\cdot\mathfrak{q}')$.
\hfill\ding{71}\end{remark}

It goes without saying that the properties mentioned in  Remark~\ref{rem:der} are the ones that are usually  learnt in elementary calculus  through the analytic methods (cf. \cite{spivak}); so are the following theorem and lemma.

\begin{theorem}[$\texttt{\textbf{IVT}}
\Longrightarrow\texttt{\textbf{Rolle}}
 + \texttt{\textbf{MVT}}
 + \texttt{\textbf{Derivative Signs}}$]\label{prop:ivt}

\noindent
Let $F$ be an ordered field in which \textup{$\texttt{\textbf{IVT}}$} holds. Let $\mathfrak{p}(x)$ be a polynomial with the coefficients in $F$ and let $a,b\!\in\!F$ with $a\!\!<\!\!b$.
\begin{itemize}\itemindent=8.75em
\item[{\bf Rolle's Theorem:}] If  $\mathfrak{p}(a)\!=\!\mathfrak{p}(b)\!=\!0$, then  for some $c\!\in]a,b[$ we have  $\mathfrak{p}'(c)\!=\!0$.
\item[{\bf Mean Value Theorem:}] There exists some $c\!\!\in]a,b[$ such that $\mathfrak{p}'(c)\!=\!    \dfrac{\mathfrak{p}(b)\!-\!\mathfrak{p}(a)}{b-a}$.
\item[{\bf Derivative Signs:}] If $\mathfrak{p}'(x)\!\!>\!\!0$ \textup{(respectively, $\mathfrak{p}'(x)\!\!<\!\!0$)} for all $x\!\!\in]a,b[$, then $\mathfrak{p}(u)\!\!<\!\!\mathfrak{p}(v)$ \textup{(respectively, $\mathfrak{p}(u)\!\!>\!\!\mathfrak{p}(v)$)} for any $u,v\!\in\!F$ with  $a\!\!<\!\!u\!\!<\!\!v\!\!<\!\!b$.
\end{itemize}
\end{theorem}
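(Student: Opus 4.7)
The plan is to establish Rolle's theorem first using $\texttt{\textbf{IVT}}$ together with polynomial factoring, then to derive the Mean Value Theorem from Rolle by the standard auxiliary-polynomial trick, and finally to deduce the Derivative Signs statement directly from the MVT.

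For Rolle, I would first dispose of the trivial case $\mathfrak{p}\equiv 0$ (where $\mathfrak{p}'\equiv 0$ and any $c\in\,]a,b[$ works). A nonzero polynomial has only finitely many roots, so within $[a,b]$ I can choose two \emph{consecutive} roots $a^{*}<b^{*}$ of $\mathfrak{p}$: elements with $a\leqslant a^{*}<b^{*}\leqslant b$ and no root of $\mathfrak{p}$ lying in $]a^{*},b^{*}[$. By polynomial division, factor $\mathfrak{p}(x)=(x-a^{*})^m(x-b^{*})^n\,h(x)$ with $m,n\geqslant 1$ and $h(a^{*}),h(b^{*})\neq 0$. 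Then $h$ has no root in $[a^{*},b^{*}]$: at the endpoints by construction, and for any $c\in\,]a^{*},b^{*}[$ the equality $h(c)=0$ would force $\mathfrak{p}(c)=0$, contradicting consecutivity. The contrapositive of $\texttt{\textbf{IVT}}$ applied to $h$ then shows that $h(a^{*})$ and $h(b^{*})$ share the same sign. Using the product and sum rules from Remark~\ref{rem:der},
$$\mathfrak{p}'(x)=(x-a^{*})^{m-1}(x-b^{*})^{n-1}\,g(x),$$
where $g(x)=m(x-b^{*})h(x)+n(x-a^{*})h(x)+(x-a^{*})(x-b^{*})h'(x)$. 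Substitution yields $g(a^{*})=m(a^{*}-b^{*})h(a^{*})$ and $g(b^{*})=n(b^{*}-a^{*})h(b^{*})$, which have opposite signs because $a^{*}-b^{*}<0<b^{*}-a^{*}$ while $h(a^{*}),h(b^{*})$ agree in sign. Invoking $\texttt{\textbf{IVT}}$ on $g$ over $[a^{*},b^{*}]$ produces some $c\in\,]a^{*},b^{*}[\,\subseteq\,]a,b[$ with $g(c)=0$, whence $\mathfrak{p}'(c)=0$.

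For the Mean Value Theorem, I would apply the just-proven Rolle's theorem to the auxiliary polynomial $\ell(x)=\mathfrak{p}(x)-\mathfrak{p}(a)-\frac{\mathfrak{p}(b)-\mathfrak{p}(a)}{b-a}(x-a)$, which satisfies $\ell(a)=\ell(b)=0$; Rolle then provides $c\in\,]a,b[$ with $\ell'(c)=0$, i.e.\ $\mathfrak{p}'(c)=\frac{\mathfrak{p}(b)-\mathfrak{p}(a)}{b-a}$. For the Derivative Signs statement, given $a<u<v<b$ with $\mathfrak{p}'>0$ throughout $]a,b[$, MVT on $[u,v]$ yields some $c\in\,]u,v[$ with $\mathfrak{p}(v)-\mathfrak{p}(u)=\mathfrak{p}'(c)(v-u)>0$; the $\mathfrak{p}'<0$ case is symmetric.

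The main obstacle is Rolle itself, specifically the passage from ``both $a$ and $b$ are roots of $\mathfrak{p}$'' to a factorization whose cofactor $h$ has constant sign on $[a^{*},b^{*}]$. This step rests on two ingredients: the finiteness of the zero set of a nonzero polynomial (to extract a pair of consecutive roots) and the contrapositive of $\texttt{\textbf{IVT}}$ (to conclude constancy of sign). Once these are in hand, the remainder of the argument is routine computation using the derivative rules from Remark~\ref{rem:der}.
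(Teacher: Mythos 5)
Your proposal is correct and follows essentially the same route as the paper: reduce to a subinterval with no interior roots, factor out $(x-a^{*})^m(x-b^{*})^n$, use the contrapositive of $\texttt{\textbf{IVT}}$ to see the cofactor has constant sign, observe the bracketed factor of $\mathfrak{p}'$ changes sign across the interval, and apply $\texttt{\textbf{IVT}}$ again; MVT and Derivative Signs then follow by the same standard reductions. The only cosmetic difference is that you make the passage to consecutive roots $a^{*},b^{*}$ explicit, where the paper simply assumes without loss of generality that $]a,b[$ contains no root.
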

\begin{proof}
 For
{\sf Rolle's Theorem} we note that since any polynomial can have only a finite number of roots, then we can assume that there is no root of $\mathfrak{p}$ in the open interval $]a,b[$. Now, by the assumption $\mathfrak{p}(a)\!=\!\mathfrak{p}(b)\!=\!0$, both $(x\!-\!a)$ and $(x\!-\!b)$  divide $\mathfrak{p}(x)$; let $m$ be the greatest natural number such that $(x\!-\!a)^m$  divides $\mathfrak{p}(x)$ and $n$ be the greatest number such that $(x\!-\!b)^n$  divides $\mathfrak{p}(x)$. Then we can write $\mathfrak{p}(x)\!=\!(x\!-\!a)^m(x\!-\!b)^n\mathfrak{q}(x)$
for some polynomial $\mathfrak{q}(x)$ that has no root in $]a,b[$, and so by $\texttt{\textbf{IVT}}$ we have $\mathfrak{q}(a)\mathfrak{q}(b)\!\!>\!\!0$. Therefore, we have, by Remark~\ref{rem:der}, that  $\mathfrak{p}'(x)\!=\!(x\!-\!a)^{m-1}(x\!-\!b)^{n-1}
\mathfrak{r}(x)$ where
$\mathfrak{r}(x)\!=\!m(x\!-\!b)\mathfrak{q}(x)\!+\!
n(x\!-\!a)\mathfrak{q}(x)\!+\!(x\!-\!a)(x\!-\!b)
\mathfrak{q}'(x)$. Whence, 
$\mathfrak{r}(a)\mathfrak{r}(b)
\!=\!m(a\!-\!b)\mathfrak{q}(a)n(b\!-\!a)\mathfrak{q}(b)
\!=\!\!-\!mn(b\!-\!a)^2\mathfrak{q}(a)\mathfrak{q}(b)\!\!<\!\!0$,
  and so $\texttt{\textbf{IVT}}$ implies the existence of some $c\!\in]a,b[$ with  $\mathfrak{r}(c)\!=\!0$; from which  $\mathfrak{p}'(c)\!=\!0$ follows.

The  {\sf Mean Value Theorem}  follows, classically and standardly,  from {\sf Rolle's Theorem} for the polynomials
$\mathfrak{q}(x)\!=\!\mathfrak{p}(x)\!-\!
\frac{\mathfrak{p}(b)\!-\!\mathfrak{p}(a)}{b\!-\!a}x$, and $\mathfrak{r}(x)\!=\!\mathfrak{q}(x)\!-\!\mathfrak{q}(a)$,  since $\mathfrak{q}(a)\!=\!\mathfrak{q}(b)$ and so
$\mathfrak{r}(a)\!=\!\mathfrak{r}(b)\!=\!0$; also, $\mathfrak{r}'(x)\!=\!\mathfrak{q}'(x)
\!=\!\mathfrak{p}'(x)\!-\!\frac{\mathfrak{p}(b)-\mathfrak{p}(a)}{b-a}$.

{\sf Derivative Signs} easily follows from the  {\sf Mean Value Theorem}: for any such $u,v$ there exists some $w\!\in]u,v[$ such that $\mathfrak{p}(v)\!-\!\mathfrak{p}(u)
\!=\!\mathfrak{p}'(w)\cdot(v\!-\!u)
$. Now, if $\mathfrak{p}'(w)\!\!>\!\!0$, then $\mathfrak{p}(v)\!\!>\!\!\mathfrak{p}(u)$, and if $\mathfrak{p}'(w)\!\!<\!\!0$, then $\mathfrak{p}(v)\!\!<\!\!\mathfrak{p}(u)$.
 \end{proof}

The last (and the strongest) witness for  the strength of $\texttt{\textbf{IVT}}$ is Tarski's Theorem that proves that the theory $\texttt{\textbf{OF}}\!+\!\texttt{\textbf{IVT}}$ is complete; i.e., for any sentence $\theta$ in the language of ordered fields either we have $\texttt{\textbf{OF}}\!+\!\texttt{\textbf{IVT}}\vdash\theta$ or we have
$\texttt{\textbf{OF}}\!+\!\texttt{\textbf{IVT}}\vdash\neg\theta$.
In the Appendix we present a somewhat modified proof of Kreisel and Krivine \cite{kk} for this result (Theorem~\ref{prop:ivtcomp}). Let us then note this last result implies that every statement that is true in $\mathbb{R}$ is provable from $\texttt{\textbf{IVT}}$ ($\!+\texttt{\textbf{OF}}$). Since, otherwise its negation would have been provable, and this would contradict its truth in $\mathbb{R}$. Thus, for example we have $\texttt{\textbf{IVT}}
\Longrightarrow\texttt{\textbf{D-Sup}}$ (and also $\texttt{\textbf{IVT}}
\Longrightarrow\texttt{\textbf{DCI}}$ and
$\texttt{\textbf{IVT}}
\Longrightarrow\texttt{\textbf{D-Cut}}$, etc.) over the theory $\texttt{\textbf{OF}}$.   Moreover, if an axiom (or an axiom scheme)  that is true in $\mathbb{R}$ can prove $\texttt{\textbf{IVT}}$, then it is actually equivalent with $\texttt{\textbf{IVT}}$; and so can be used as another axiomatization of the theory of real closed fields (over  $\texttt{\textbf{OF}}$). So, by the following theorem (\ref{prop:infivt}), all the schemes that we have considered so far (except $\texttt{\textbf{DCI}}_1$) are equivalent with $\texttt{\textbf{IVT}}$. We will need the following lemma for proving $\texttt{\textbf{D-Inf}}
\Longrightarrow\texttt{\textbf{IVT}}$.

\begin{lemma}[Continuity of Polynomials]\label{lem:cont}

\noindent
For any polynomial $\mathfrak{q}$ and element  $w$, if $\mathfrak{q}(w)\!\!>\!\!0$ \textup{(respectively,  if $\mathfrak{q}(w)\!\!<\!\!0$)}, then there exists some $\epsilon\!\!>\!\!0$ such that for any $x\!\in\![w\!-\!\epsilon,w\!+\!\epsilon]$ we have $\mathfrak{q}(x)\!\!>\!\!0$ \textup{(respectively, $\mathfrak{q}(x)\!\!<\!\!0$)}.
\end{lemma}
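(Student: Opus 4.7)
The plan is to reduce the continuity statement to an easy algebraic bound, by ``factoring out'' $(x-w)$ from $\mathfrak{q}(x)-\mathfrak{q}(w)$. First I would write $\mathfrak{q}(x)=\sum_{i\leqslant n} a_i x^i$ and use the familiar identity $x^i-w^i=(x-w)(x^{i-1}+x^{i-2}w+\cdots+w^{i-1})$ to obtain a polynomial $\mathfrak{r}(x)$ (with coefficients in the same ordered field, depending on $w$) such that
$$\mathfrak{q}(x)-\mathfrak{q}(w)=(x-w)\cdot\mathfrak{r}(x).$$
Explicitly, $\mathfrak{r}(x)=\sum_{i=1}^{n} a_i\bigl(x^{i-1}+x^{i-2}w+\cdots+w^{i-1}\bigr)$; this is a purely formal manipulation valid in any commutative ring, and in particular in any ordered field.

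Next I would bound $|\mathfrak{r}(x)|$ on the interval $[w-1,w+1]$. On that interval $|x|\leqslant|w|+1$, so each summand $x^j w^{i-1-j}$ is at most $(|w|+1)^{i-1}$ in absolute value, giving a concrete bound
$$M\;=\;\sum_{i=1}^{n}|a_i|\cdot i\cdot(|w|+1)^{i-1}$$
with $|\mathfrak{r}(x)|\leqslant M$ for every $x\in[w-1,w+1]$. Because we work in an ordered field, $M+1$ is a strictly positive element and we may divide by it.

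Finally, I would choose $\epsilon$ so that the error term $(x-w)\mathfrak{r}(x)$ cannot overwhelm $\mathfrak{q}(w)$. Take
$$\epsilon\;=\;\min\Bigl\{1,\ \tfrac{|\mathfrak{q}(w)|}{M+1}\Bigr\}.$$
Then for every $x\in[w-\epsilon,w+\epsilon]$ we have $x\in[w-1,w+1]$, hence
$$|\mathfrak{q}(x)-\mathfrak{q}(w)|=|x-w|\cdot|\mathfrak{r}(x)|\leqslant\epsilon\cdot M\;<\;|\mathfrak{q}(w)|,$$
so $\mathfrak{q}(x)$ has the same sign as $\mathfrak{q}(w)$. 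This handles both the $\mathfrak{q}(w)>0$ and the $\mathfrak{q}(w)<0$ case simultaneously.

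The only mildly delicate point is that the proof must live inside the ordered field itself (no appeal to the Archimedean property is available, since the lemma is used \emph{before} proving that $\texttt{\textbf{D-Inf}}$ gives $\texttt{\textbf{IVT}}$). The bound $M$ is built from finitely many field elements using $+$ and $\cdot$, and $|\mathfrak{q}(w)|/(M+1)$ is a legitimate element of the field, so the construction is uniform and goes through in any ordered field, which is what makes the argument clean.
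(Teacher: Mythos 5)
Your proof is correct and takes essentially the same approach as the paper's: both produce an explicit field element $\epsilon$, built from the coefficients and $w$ by a purely algebraic bound on $|\mathfrak{q}(x)-\mathfrak{q}(w)|$ over a unit neighbourhood of $w$, so that the argument works in an arbitrary ordered field without any appeal to the Archimedean property. The only (cosmetic) difference is that you factor $\mathfrak{q}(x)-\mathfrak{q}(w)=(x-w)\mathfrak{r}(x)$ via the telescoping identity, whereas the paper expands $(w+\delta)^k-w^k$ binomially; the resulting estimates are interchangeable.
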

\begin{proof}
Let $\mathfrak{q}(x)\!=\!\sum_{k\leqslant m}a_kx^k$; and suppose that for some $A\!\!>\!\!0$ we have $|w|\!\!<\!\!A$ and $|a_k|\!\!<\!\!A$ for $k\!\!\leqslant\!\!m$. Let $B\!=\!A\sum_{k\leqslant  m}\sum_{i<k}{k\choose i}A^i$, and choose an $\epsilon\!\!>\!\!0$ such that $\epsilon\!\!<\!\!1$ and  $\epsilon B\!\!<\!\!\frac{1}{2}|\mathfrak{q}(w)|$; note that $|\mathfrak{q}(w)|\!\neq\!0$. Now, for any $x\!\in\![w\!-\!\epsilon,w\!+\!\epsilon]$ we have $x\!=\!w\!+\!\delta$ for some $\delta\!\in\![\!-\!\epsilon,\epsilon]$. So,

\begin{tabular}{lllll}
$|\mathfrak{q}(x)-\mathfrak{q}(w)|$ & $=$ & $|\sum_{k\leqslant m}a_k([w\!+\!\delta]^k-w^k)|$
& $=$ & $|\sum_{k\leqslant m}a_k\sum_{i<k}{k\choose i}\delta^{k-i}w^i|$ \\
& $\leqslant$ & $\sum_{k\leqslant m}|a_k|\sum_{i<k}{k\choose i}|\delta|^{k-i}|w|^i$
& $\leqslant$ &  $\sum_{k\leqslant m}A\sum_{i<k}{k\choose i}\epsilon^{k-i}A^i$ \\
& $=$ & $\epsilon A\sum_{k\leqslant m}\sum_{i<k}{k\choose i}\epsilon^{k-1-i}A^i$
& $\leqslant$ & $\epsilon B$ \;\qquad
 $<$ \;\qquad   $\frac{1}{2}|\mathfrak{q}(w)|.$
\end{tabular}

\noindent
Whence, $\mathfrak{q}(w)\!-\!\frac{1}{2}|\mathfrak{q}(w)|\!\!<\!\!
\mathfrak{q}(x)\!\!<\!\!\mathfrak{q}(w)\!+\!
\frac{1}{2}|\mathfrak{q}(w)|$. So, if $\mathfrak{q}(w)\!\!>\!\!0$, then $\mathfrak{q}(x)\!\!>\!\!\frac{1}{2}
\mathfrak{q}(w)\!\!>\!\!0$ and if $\mathfrak{q}(w)\!\!<\!\!0$, then $\mathfrak{q}(x)\!\!<\!\!\frac{1}{2}
\mathfrak{q}(w)\!\!<\!\!0$, for any $x\!\in\![w\!-\!\epsilon,w\!+\!\epsilon]$.
 \end{proof}

\begin{theorem}[$\texttt{\textbf{D-Inf}}
\Longrightarrow\texttt{\textbf{IVT}}$]
\label{prop:infivt}

\noindent
Any ordered field that satisfies \textup{$\texttt{\textbf{D-Inf}}$} satisfies \textup{$\texttt{\textbf{IVT}}$} too.
\end{theorem}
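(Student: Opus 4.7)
The plan is to mimic the classical real-analytic proof of the Intermediate Value Theorem, using \textup{\texttt{\textbf{D-Inf}}} in place of the (second-order) infimum property and invoking Lemma~\ref{lem:cont} as a replacement for the topological continuity of polynomials. So let $F$ be an ordered field satisfying \textup{\texttt{\textbf{D-Inf}}}, let $\mathfrak{p}$ be a polynomial with coefficients in $F$, and let $u\!<\!v$ with $\mathfrak{p}(u)\!\cdot\!\mathfrak{p}(v)\!<\!0$. Without loss of generality I may assume $\mathfrak{p}(u)\!<\!0\!<\!\mathfrak{p}(v)$ (otherwise work with $-\mathfrak{p}$, whose definable set is the same because of the explicit arithmetic of coefficients).

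First I would apply \textup{\texttt{\textbf{D-Inf}}} to the formula $\varphi(x)\equiv \big(u\!\leqslant\!x\!\leqslant\!v\wedge\mathfrak{p}(x)\!>\!0\big)$. This set is nonempty (witnessed by $v$) and bounded below by $u$, so \textup{\texttt{\textbf{D-Inf}}} provides some $c\!\in\!F$ that is its greatest lower bound, and clearly $u\!\leqslant\!c\!\leqslant\!v$. The goal is then to show that $\mathfrak{p}(c)\!=\!0$, and afterwards that $u\!<\!c\!<\!v$ (the latter is immediate once $\mathfrak{p}(c)=0$, because $\mathfrak{p}(u)$ and $\mathfrak{p}(v)$ are nonzero).

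The heart of the argument is to rule out both $\mathfrak{p}(c)\!>\!0$ and $\mathfrak{p}(c)\!<\!0$ by applying Lemma~\ref{lem:cont}. If $\mathfrak{p}(c)\!>\!0$, then in particular $c\!>\!u$ (since $\mathfrak{p}(u)\!<\!0$), so by the lemma there is some $\epsilon\!>\!0$, small enough that $c\!-\!\epsilon\!\geqslant\!u$, for which $\mathfrak{p}(x)\!>\!0$ on $[c\!-\!\epsilon,c\!+\!\epsilon]$. Then $c\!-\!\tfrac{\epsilon}{2}$ satisfies $\varphi$ yet is strictly less than $c$, contradicting that $c$ is a lower bound. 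Conversely, if $\mathfrak{p}(c)\!<\!0$, then $c\!<\!v$, and Lemma~\ref{lem:cont} produces some $\epsilon\!>\!0$, small enough that $c\!+\!\epsilon\!\leqslant\!v$, on which $\mathfrak{p}$ is negative; now, using the \textup{\texttt{\textbf{D-Inf}}}-defining biconditional, $c\!+\!\epsilon$ fails to be a lower bound of the $\varphi$-set (since $c\!+\!\epsilon\!>\!c$), whence some $x$ satisfies $\varphi(x)$ with $x\!<\!c\!+\!\epsilon$, and because $c$ is a lower bound we have $c\!\leqslant\!x\!<\!c\!+\!\epsilon$; but then $\mathfrak{p}(x)\!<\!0$ by choice of $\epsilon$, contradicting $\varphi(x)$.

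The only subtlety I anticipate is keeping the algebra clean, namely translating the greatest-lower-bound characterization supplied by \textup{\texttt{\textbf{D-Inf}}} (which is phrased through a universally quantified biconditional over candidate bounds) into the two usable consequences ``$c$ is a lower bound'' and ``no $c'>c$ is a lower bound'', and making sure that the $\epsilon$ delivered by Lemma~\ref{lem:cont} can be further shrunk to respect the endpoint constraints $c\!-\!\epsilon\!\geqslant\!u$ or $c\!+\!\epsilon\!\leqslant\!v$; since the lemma's conclusion is preserved under shrinking $\epsilon$, this is a bookkeeping matter rather than a genuine obstacle.
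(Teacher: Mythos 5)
Your proposal is correct and follows essentially the same route as the paper: apply \texttt{\textbf{D-Inf}} to the definable set of points in $[u,v]$ where $\mathfrak{p}$ has the opposite sign to $\mathfrak{p}(u)$, then use Lemma~\ref{lem:cont} to rule out $\mathfrak{p}(c)\neq 0$ at the infimum. The only cosmetic difference is that the paper avoids your ``WLOG $\mathfrak{p}(u)<0<\mathfrak{p}(v)$'' normalization by working with the condition $\mathfrak{p}(u)\cdot\mathfrak{p}(x)<0$ throughout, which handles both sign cases uniformly.
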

\begin{proof}
Suppose that for  $\mathfrak{p}$ and $u,v$ with $u\!\!<\!\!v$ we have
$\mathfrak{p}(u)\mathfrak{p}(v)\!\!<\!\!0$.
Let $\varphi(x)\!=\!
u\!\!\leqslant\!\!x\!\!\leqslant\!\!v\wedge \mathfrak{p}(u)\mathfrak{p}(x)\!\!<\!\!0$. Then $\varphi(v)$ and $\forall x\,[\varphi(x)\rightarrow u\!\!\leqslant\!\!x]$. So, by $\texttt{\textbf{D-Inf}}$ there exists some $w$ such that (1)~$\forall y\,\big(\forall x\,[\varphi(x)\rightarrow y\!\!\leqslant\!\!x] \leftrightarrow y\!\!\leqslant\!\!w  \big)$. We show that $u\!\!\leqslant\!\!w\!\!\leqslant\!\!v$ and $\mathfrak{p}(w)\!=\!0$.
If $w\!\!<\!\!u$, then by (1) for some $x$ we should have $\varphi(x)$ and $x\!\!<\!\!u$, and this is impossible; so $u\!\!\leqslant\!\!w$. Also, if $v\!\!<\!\!w$, then there exists some $w'$ such that $v\!\!<\!\!w'\!\!<\!\!w$; and so by (1) for any $x$ with $\varphi(x)$ we should have $w'\!\!\leqslant\!\!x$ and in particular, since $\varphi(v)$, we should have $w'\!\!\leqslant\!\!v$; contradiction, thus $w\!\!\leqslant\!\!v$. Now, we show that $\mathfrak{p}(u)\mathfrak{p}(w)\!=\!0$.  If not, i.e., $\mathfrak{p}(u)\mathfrak{p}(w)\!\neq\!0$,
then we have  either (i)~$\mathfrak{p}(u)\mathfrak{p}(w)\!\!<\!\!0$ or (ii)~$\mathfrak{p}(u)\mathfrak{p}(w)\!\!>\!\!0$. In either case by Lemma~\ref{lem:cont} there exists some $\epsilon\!\!>\!\!0$ such that either (i')~$\forall x\!\in\![w\!-\!\epsilon,w\!+\!\epsilon]\!\!:  \mathfrak{p}(u)\mathfrak{p}(x)\!\!<\!\!0$ or
(ii')~$\forall x\!\in\![w\!-\!\epsilon,w\!+\!\epsilon]\!\!:  \mathfrak{p}(u)\mathfrak{p}(x)\!\!>\!\!0$.
 In case (i') we have $\varphi(w\!-\!\epsilon)$ but then by (1), for $y\!=\!w$, we should have $w\!\!\leqslant\!\!w\!-\!\epsilon$; a contradiction.
In case (ii')  by (1), since $w\!+\!\epsilon\!\not\leqslant\!w$, there should exist some $x$ such that $\varphi(x)$ and $x\!\!<\!\!w\!+\!\epsilon$. By (1), for $y\!=\!w$, we should have $w\!\!\leqslant\!\!x$, from $\varphi(x)$. So, $x\!\in\![w\!-\!\epsilon,w\!+\!\epsilon]$; but then we should have by (ii') that $\mathfrak{p}(u)\mathfrak{p}(x)\!\!>\!\!0$   contradicting  $\varphi(x)$ (which implies $\mathfrak{p}(u)\mathfrak{p}(x)\!\!<\!\!0$).  Thus, $\mathfrak{p}(u)\mathfrak{p}(w)\!=\!0$;  and so $\mathfrak{p}(w)\!=\!0$. Let us note that $w\!\neq\!u,v$ by $\mathfrak{p}(u)\mathfrak{p}(v)\!\!<\!\!0$; and so $w\!\in]u,v[$.
 \end{proof}

\begin{remark}[\textup{\bf $\texttt{\textbf{D-Inf}}
\Longrightarrow\texttt{\textbf{RCF}}$}]\label{rem:infrcf}

\noindent
Theorems~\ref{prop:infivt} and  Theorem~\ref{prop:ivtrcf} imply that any ordered field that satisfies $\texttt{\textbf{D-Inf}}$ satisfies $\texttt{\textbf{RCF}}$ too. This could also be proved directly by the elementary analytic way (using Lemma~\ref{lem:cont}): for a given $a\!\!>\!\!0$, the infimum $x$ of the (definable) set $\{u\mid u\!\!>\!\!0\wedge u^2\!\!>\!\!a\}$ satisfies $x^2=a$, and the infimum $y$ of the set $\{u\mid (u^{2n+1}\!+\!\sum_{i\leqslant 2n}a_iu^{i})\!>\!0\}$ satisfies $y^{2n+1}\!+\!\sum_{i\leqslant 2n}a_iy^{i}\!=\!0$.
\hfill\ding{71}\end{remark}

So, from Theorem~\ref{prop:ivtcomp} we will have   $\texttt{\textbf{IVT}}
\equiv\texttt{\textbf{D-Inf}}\equiv
\texttt{\textbf{D-Sup}}
\equiv\texttt{\textbf{D-Cut}}
\equiv\texttt{\textbf{DCI}}
\equiv\texttt{\textbf{DCI}}_2$.
As for $\texttt{\textbf{RCF}}$, we have shown only the  conditional $\texttt{\textbf{IVT}}
\Longrightarrow\texttt{\textbf{RCF}}$ in Theorem~\ref{prop:ivtrcf}. Its converse ($\texttt{\textbf{RCF}}
\Longrightarrow\texttt{\textbf{IVT}}$) is usually proved in the literature by first proving the Fundamental Theorem of Algebra; which says that the field
$\mathbb{R}(\imath^{^{\!\!\!\centerdot}})$
 is algebraically closed, where  $F(\imath^{^{\!\!\!\centerdot}})$ is the result of adjoining $\imath^{^{\!\!\!\centerdot}}\!=\!\sqrt{\!-\!1}$ to $F$ (see e.g. \cite[Theorem~2.11]{basu}).  A field is called algebraically closed when  any polynomial with coefficients in it has a root (in that field).
As a matter of fact, an equivalent definition for real closed fields in the literature is that
``an ordered field $F$ is real closed if and only if the field $F(\imath^{^{\!\!\!\centerdot}})$ is algebraically closed''.
An equivalent statement is that ``{\sl an ordered field is real closed if and only if every positive element has a square root and any polynomial can be factorized into linear and quadratic factors}''. This is also an equivalent form of the   Fundamental Theorem of Algebra (for $\mathbb{R}$); for which we propose the following first-order scheme:

\begin{definition}[$\texttt{\textbf{FTA}}_{\textsf{RCF}}$]
\label{def:fta}

\noindent
Let $\texttt{\textbf{FTA}}_{\textsf{RCF}}$ denote  the conjunction of $\forall x\!\!>\!\!0\,\exists y\,(y^2\!=\!x) $ and the following first-order scheme
$$\forall \{a_i\}_{i<2n}\,\exists \{b\}_{j<n}\,\exists \{c\}_{j<n}\,\forall x\,[(x^{2n}\!+\!\sum_{i<2n}a_ix^{i})\!=\!
\prod_{j<n}(x^2\!+\!b_jx\!+\!c_j)]$$
for any $n\!\in\!\mathbb{N}$ with $n\!\!>\!\!1$.
\hfill\ding{71}\end{definition}
So, $\texttt{\textbf{FTA}}_{\textsf{RCF}}$ says that any even-degree polynomial can be factorized as a product of  some quadratic polynomials.
The following theorem is proved in  e.g. \cite[Theorem~2.11]{basu}; here we present a different proof:

\begin{theorem}[$\texttt{\textbf{FTA}}_{\textsf{RCF}}
\Longrightarrow\texttt{\textbf{IVT}}$]\label{th:ftaivt}

\noindent
Any ordered field that satisfies \textup{$\texttt{\textbf{FTA}}_{\textsf{RCF}}$} satisfies \textup{$\texttt{\textbf{IVT}}$} too.
\end{theorem}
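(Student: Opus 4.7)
The plan is to reduce the hypotheses of $\texttt{\textbf{IVT}}$ to a form where $\texttt{\textbf{FTA}}_{\textsf{RCF}}$ directly applies, and then to sign-chase the resulting quadratic factorization. Fix a polynomial $\mathfrak{p}$ and elements $u<v$ with $\mathfrak{p}(u)\mathfrak{p}(v)<0$. Dividing by the leading coefficient preserves both the zero set of $\mathfrak{p}$ and the strict sign of $\mathfrak{p}(u)\mathfrak{p}(v)$ (this sign is merely multiplied by a positive square), so I may assume $\mathfrak{p}$ is monic.

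The first reduction normalizes the degree. Pick any $w>v$; multiplying $\mathfrak{p}$ by $(x-w)$ contributes the positive factor $(u-w)(v-w)$ to the product at $u,v$, hence preserves the sign hypothesis, and since $w\notin (u,v)$ any root of the enlarged polynomial lying in $(u,v)$ is automatically a root of $\mathfrak{p}$. By attaching at most three such linear factors (and renaming) I obtain a monic polynomial of even degree $2n$ with $n\geq 2$, to which $\texttt{\textbf{FTA}}_{\textsf{RCF}}$ is applicable.

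Now apply $\texttt{\textbf{FTA}}_{\textsf{RCF}}$ to write $\mathfrak{p}(x)=\prod_{j<n}q_j(x)$ with $q_j(x)=x^2+b_jx+c_j$, and split each factor according to the sign of its discriminant $\Delta_j=b_j^2-4c_j$. If $\Delta_j<0$, completing the square gives $q_j(x)=(x+b_j/2)^2+(-\Delta_j/4)$, a sum of a square and a positive element, hence strictly positive throughout $F$. If $\Delta_j\geq 0$, then $\sqrt{\Delta_j}\in F$ by the square-root axiom, so $q_j(x)=(x-r_j)(x-s_j)$ for some $r_j\leq s_j$ in $F$; in this case $q_j(x)<0$ precisely on the open interval $(r_j,s_j)$.

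Since $\mathfrak{p}(u)\mathfrak{p}(v)\neq 0$, no $q_j$ vanishes at $u$ or at $v$, and the product identity $\mathfrak{p}(u)\mathfrak{p}(v)=\prod_{j<n}q_j(u)q_j(v)<0$ forces some index $j_0$ with $q_{j_0}(u)q_{j_0}(v)<0$. This $q_{j_0}$ cannot be of the everywhere-positive type, so it factors as $(x-r)(x-s)$ with $r\leq s$ in $F$, and a short case check comparing $u,v$ with $r,s$ under $u<v$ and the non-vanishing of $q_{j_0}$ at $u,v$ places exactly one of $r,s$ strictly between $u$ and $v$; this is the desired root of $\mathfrak{p}$ in $(u,v)$. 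I anticipate the main obstacle to be the preliminary bookkeeping: simultaneously arranging the monic form, even degree at least $4$, preservation of the sign hypothesis, and the correspondence between the roots in $(u,v)$ of the enlarged polynomial and those of the original $\mathfrak{p}$, while the concluding sign chase on the factorization is essentially formal.
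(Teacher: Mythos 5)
Your argument is correct, and it takes a genuinely different route from the paper's. You pad $\mathfrak{p}$ with linear factors $(x-w)$ for $w>v$ until it is monic of even degree $2n$ with $n\geq 2$, apply $\texttt{\textbf{FTA}}_{\textsf{RCF}}$, and then locate a root inside $]u,v[$ by a sign analysis: some quadratic factor must change sign between $u$ and $v$, such a factor has nonnegative discriminant, hence splits over $F$ by the square-root axiom, and exactly one of its two roots lies strictly between $u$ and $v$. The paper instead makes the rational substitution $x\mapsto u+\frac{v-u}{1+x^2}$, forming $\mathfrak{q}(x)=\frac{1}{\mathfrak{p}(u)}(1+x^2)^m\,\mathfrak{p}(u+\frac{v-u}{1+x^2})$, which is automatically monic of even degree $2m$ and satisfies $\mathfrak{q}(0)=\mathfrak{p}(v)/\mathfrak{p}(u)<0$; a quadratic factor with negative constant term then has positive discriminant, and pulling its root back through the substitution lands in $]u,v[$ with no endpoint bookkeeping, since every point of $F$ maps into $]u,v]$. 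So the substitution buys the degree normalization and the interval constraint for free, at the price of an unmotivated-looking change of variable; your version buys transparency and an explicit picture of where the root sits, at the price of the padding and the two-case endpoint comparison you correctly identify as the main overhead. Two small points to make explicit in a final write-up: the degenerate degrees (for $\deg\mathfrak{p}\leq 1$ the scheme's restriction $n>1$ forces extra padding, though the conclusion is immediate there anyway), and the fact that in the first-order setting ``the leading coefficient'' requires a finite case split over which $a_i$ is nonzero --- an issue the paper's own proof shares.
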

\begin{proof}  For a polynomial
$\mathfrak{p}(x)$ with degree $m$, suppose that $\mathfrak{p}(u)\cdot\mathfrak{p}(v)\!\!<\!\!0$ for some $u,v$ with $u\!\!<\!\!v$. Put
$\mathfrak{q}(x)\!=\!\frac{1}{\mathfrak{p}(u)}(1\!+\!x^2)^m\,
\mathfrak{p}(u\!+\!\frac{v-u}{1+x^2})$. Then $\mathfrak{q}(x)\!=\!x^{2m}\!+\!\mathfrak{r}(x^2)$ for a polynomial $\mathfrak{r}(x)$ whose degree is less than $m$.
By  $\texttt{\textbf{FTA}}_{\textsf{RCF}}$ we have
$\mathfrak{q}(x)\!=\!\prod_{j<m}(x^2\!+\!b_jx\!+\!c_j)$ for
 some $\{b_j\}_{j<m}$ and $\{c_j\}_{j<m}$.
Now, $\prod_{j<m}c_j\!=\!\mathfrak{q}(0)
\!=\!\frac{\mathfrak{p}(v)}{\mathfrak{p}(u)}
\!=\!\frac{\mathfrak{p}(u)\mathfrak{p}(v)}{\mathfrak{p}(u)^2}
\!\!<\!\!0$. So, $c_j\!\!<\!\!0$ for some $j$.
Assume, $c_0\!\!<\!\!0$.
Whence, $b_0^2\!-\!4c_0\!\!>\!\!0$ and so  $b_0^2\!-\!4c_0\!=\!d^2$ for some $d$. Now, $s\!=\!\frac{1}{2}(\!-\!b_0\!+\!d)$ is a root of $x^2\!+\!b_0x\!+\!c_0$ and so it is also a root of $\mathfrak{q}(x)$. Finally, for $r\!=\!u\!+\!\frac{v-u}{1+s^2}$ we have, obviously,
$u\!\!<\!\!r\!\!<\!\!v$ and $\mathfrak{p}(r)\!=\!0$.
  \end{proof}

So, by Theorems~\ref{th:ftaivt} ($\texttt{\textbf{FTA}}_{\textsf{RCF}}
\Longrightarrow\texttt{\textbf{IVT}}$) and~\ref{prop:ivtrcf} ($\texttt{\textbf{IVT}}
\Longrightarrow\texttt{\textbf{RCF}}$), the scheme $\texttt{\textbf{FTA}}_{\textsf{RCF}}$ implies that any odd-degree polynomial has a root.
So, one can prove, by an induction on the degree of the polynomials that, when $\texttt{\textbf{FTA}}_{\textsf{RCF}}$ holds, every polynomial can be written as the product of some linear and quadratic factors. Let us also note that $\texttt{\textbf{FTA}}_{\textsf{RCF}}
\Longrightarrow\texttt{\textbf{RCF}}$ could be proved directly as follows: for a given odd-degree polynomial $\mathfrak{p}(x)\!=\!x^{2n+1}\!+\!\sum_{i\leqslant 2n}a_ix^{i}$,  multiply it with $x$ to get an  even-degree polynomial
$x\cdot\mathfrak{p}(x)\!=\!x^{2n+2}
\!+\!\sum_{i=0}^{2n}a_ix^{i+1}$. By  $\texttt{\textbf{FTA}}_{\textsf{RCF}}$ there exist some $\{b_j\}_{j\leqslant n}$ and $\{c_j\}_{j\leqslant n}$ such that
$x^{2n+2}
\!+\!\sum_{i=0}^{2n}a_ix^{i+1}
\!=\!\prod_{j=0}^{n}(x^2\!+\!b_jx\!+\!c_j)$.
Since, $\prod_{j=0}^{n}c_j\!=\!0$, then for some $j$ we have  $c_j\!=\!0$. Assume that  $c_{0}\!=\!0$;  then we have   $x\cdot\mathfrak{p}(x)\!=\!x\cdot\big(x^{2n+1}\!+\!\sum_{i=0}^{2n}a_ix^{i}\big)
\!=\!x\cdot(x\!+\!b_{0})\cdot
\prod_{j=1}^{n}(x^2\!+\!b_jx\!+\!c_j)$. Thus,  we have the identity  $\mathfrak{p}(x)\!=\!(x\!+\!b_{0})\cdot
\prod_{j=1}^{n}(x^2\!+\!b_jx\!+\!c_j)$, and so $\mathfrak{p}(\!-\!b_0)\!=\!0$ holds.

\section{Conclusions and Open Problems}\label{sec:conc}
 Continuous Induction has been around in the literature over the last century (starting from \cite{chao} in 1919). We isolated three versions of it (Definitions~\ref{def:ind''},\ref{def:ind},\ref{def:ind'}, noting that all the other formats are equivalent to one of these three) and formalized them in first-order logic (Definitions~\ref{def:foind''},\ref{def:foind},\ref{def:foind'}) for the first time here. We showed that two of them are equivalent with each other (Theorem~\ref{th:indind'}) while the third one (and the oldest one) is not (Theorem~\ref{th:indind''}). Actually, those two strong versions are equivalent to the completeness of an ordered field, but the third one is equivalent to the Archimedean property (Theorem~\ref{th:arch}) and not with the completeness axiom. We noted that the first-order formulations of those two strong continuous induction schemes can completely axiomatize the real closed ordered fields (cf. Theorem~\ref{prop:infind}). For this theory we  collected some  axiomatizations (Definitions~\ref{def:sup},\ref{def:inf},\ref{def:cut},\ref{def:rcf},\ref{def:ivt})
which are equivalent with one another (Theorems~\ref{prop:supinf},\ref{prop:cutsup},\ref{prop:infind})
and added one; $\texttt{\textbf{FTA}}_{\textsf{RCF}}$
 a formalization of the Fundamental Theorem of Algebra (Definition~\ref{def:fta}).
The following diagram summarizes some of our  new and  old  results:
\begin{diagram}
\textbf{\texttt{D-Cut}}\; & \Leftarrow\hspace{-1em}\rImplies_{^{(\ref{prop:cutsup})}} & \textbf{\texttt{D-Sup}}\; & \Leftarrow\hspace{-1em}\rImplies_{^{(\ref{prop:supinf})}}   & \textbf{\texttt{D-Inf}}\; & \Leftarrow\hspace{-1em}\rImplies_{^{(\ref{prop:infind})}}  & \textbf{\texttt{DCI}}\; & \Leftarrow\hspace{-1em}\rImplies_{^{(\ref{th:indind'})}} & \texttt{\textbf{DCI}}_2 &  & \\
& & \dDotsto^{_{(\ref{prop:ivtcomp})}} & \luDotsto^{_{\qquad(\ref{prop:ivtcomp})}} & \dImplies_{^{(\ref{prop:infivt})}}  &
\rdTo^{_{\qquad(\ref{rem:infrcf})}}   &   &   &    &   \\
& & \texttt{\textbf{FTA}}_{\textsf{RCF}}   & \rImplies^{_{(\ref{th:ftaivt})}}  & \textbf{\texttt{IVT}} & \rImplies^{_{(\ref{prop:ivtrcf})}} & \textbf{\texttt{RCF}} &  &  &  &  \\
\end{diagram}
The completeness of $\!\texttt{\textbf{IVT}}$ ($\!+\texttt{\textbf{OF}}$) is proved in the Appendix (Theorem~\ref{prop:ivtcomp}); which, as was noted in Section~\ref{sec:rcf}, implies the equivalence of  $\texttt{\textbf{D-Inf}}$, $\texttt{\textbf{D-Sup}}$
 and  $\texttt{\textbf{FTA}}_{\textsf{RCF}}$ with  $\texttt{\textbf{IVT}}$, as well.

\begin{itemize}
\item[(\textbf{\texttt{1}})]  We leave open the existence of a nice and neat first-order proof of any implication of  the form $\texttt{\textbf{IVT}}\Longrightarrow\Theta$,  for
 $\Theta\!\in\!\{
 \texttt{\textbf{D-Inf}},
 \texttt{\textbf{D-Sup}},
 \texttt{\textbf{D-Cut}},
 \texttt{\textbf{DCI}},
 \texttt{\textbf{DCI}}_2,
 \texttt{\textbf{FTA}}_{\textsf{RCF}}\}$.
Let us note that our intent by iterating some classical theorems of Mathematical Analysis was to put emphasize on their first-order formalizability. Indeed, being formalizable in  logic is not a trivial matter. All our proofs (except the proof of Theorem~\ref{th:arch})  were first-order.
\item[(\textbf{\texttt{2}})]    The second open problem is   a nice and neat first-order proof of
$\texttt{\textbf{RCF}}\Longrightarrow
\texttt{\textbf{FTA}}_{\textsf{RCF}}$ (closing the gap in the  diagram).
As we noted earlier, there are some second-order proofs for this (e.g. the Proofs Three and Four for the Fundamental Theorem of Algebra in \cite{fta}). By G\"odel's Completeness Theorem (for first-order logic) there should exist such proofs; but what are they?
As a matter of fact, any such proof will be a beautiful   proof of the Fundamental Theorem of Algebra, which is completely real analytic (not referring to complex numbers) and it will be \emph{a first-order proof} of the theorem, for the first time.
\item[(\textbf{\texttt{3}})]    One research area which is wide open to explore, is the formalization of the other equivalent axiomatizations of the complete ordered fields  in first-order logic; and seeing whether any of them can completely axiomatize the theory of real closed fields (over $\texttt{\textbf{OF}}$).  Or more generally, does the theory $\texttt{\textbf{OF}}\!+\!\Theta$, where $\Theta$ is a first-order formalization of any of the 72 completeness axioms in \cite{deteisman}, completely axiomatize the theory of real closed fields?
This question is not easy, since for example neither $\texttt{\textbf{OF}}\!+\!\texttt{\textbf{Rolle}}$ nor
$\texttt{\textbf{OF}}\!+\!\texttt{\textbf{MVT}}$  is equivalent with the theory of real closed fields (see \cite{pelling,brcrpe}).
As we saw above, $\texttt{\textbf{OF}}\!+\!\Theta$ axiomatizes the real closed fields  if and only if $\texttt{\textbf{OF}}\!+\!\Theta\vdash\texttt{\textbf{IVT}}$.
Let us note that $\texttt{\textbf{FTA}}_{\textsf{RCF}}$ is not a first-order formalization of any of the completeness axioms in \cite{deteisman}.

\end{itemize}


\newpage

\newpage

\section*{Appendix}\label{sec:app}
Here we give a proof of Tarski's Theorem on the completeness of the theory of real closed (ordered) fields; this is a slightly modified proof given in \cite{kk}.
\begin{theorem}[$\texttt{\textbf{IVT}}$ is
$\textrm{\footnotesize Complete}$]
\label{prop:ivtcomp}

\noindent
The theory \textup{$\texttt{\textbf{OF}}\!+\!\texttt{\textbf{IVT}}$}  is complete.
\end{theorem}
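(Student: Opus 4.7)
The plan is to follow the Kreisel--Krivine route and prove completeness of $\texttt{\textbf{OF}}\!+\!\texttt{\textbf{IVT}}$ via quantifier elimination, with Sturm's theorem as the main engine. By Theorem~\ref{prop:ivtrcf} every model of the theory is a real closed field, and by Theorem~\ref{prop:ivt} we have available, inside the theory, Rolle's Theorem, the Mean Value Theorem, and the derivative-sign principle for polynomials. Together with $\texttt{\textbf{IVT}}$ itself, these are precisely the analytic tools needed to internalize Sturm's algorithm.

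The key lemma to establish is a first-order version of \emph{Sturm's Theorem}: for a polynomial $\mathfrak{p}(x)$ with coefficients in (a model of) the theory, form the Sturm chain $\mathfrak{p}_0\!=\!\mathfrak{p}$, $\mathfrak{p}_1\!=\!\mathfrak{p}'$, and $\mathfrak{p}_{i+1}\!=\!-(\mathfrak{p}_{i-1}\bmod\mathfrak{p}_i)$ via Euclidean division, until $0$ is reached; then for any $a\!<\!b$ with $\mathfrak{p}(a)\mathfrak{p}(b)\!\neq\!0$, the number of distinct roots of $\mathfrak{p}$ in $(a,b)$ equals $V(a)\!-\!V(b)$, where $V(t)$ counts sign-changes in the chain evaluated at $t$. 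The proof proceeds by monitoring how $V(t)$ varies as $t$ moves from $a$ to $b$: sign changes in $V$ can occur only at zeros of some $\mathfrak{p}_i$; using $\texttt{\textbf{IVT}}$ to locate these zeros and Rolle's Theorem plus the recursion $\mathfrak{p}_{i-1}\!=\!\mathfrak{p}_i\mathfrak{s}-\mathfrak{p}_{i+1}$ to analyze the behaviour of consecutive members near a zero, one shows $V$ drops by exactly one across each root of $\mathfrak{p}$ and is unchanged elsewhere.

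Once Sturm's Theorem is available, quantifier elimination is obtained by the standard inductive reduction: it suffices to eliminate a single existential quantifier from a conjunction $\exists x\bigwedge_j [\mathfrak{q}_j(x,\bar y)\,\lhd_j\,0]$ of polynomial (in)equalities. By splitting the real line at the (finitely many) roots of all $\mathfrak{q}_j$'s and applying Sturm's Theorem to suitable products, the existence of an $x$ with the prescribed sign pattern is captured by a Boolean combination of statements about sign-variations of Sturm chains at $\pm\infty$ and at the relevant endpoints, all of which are quantifier-free in $\bar y$. Completeness then follows immediately: every sentence of $\texttt{\textbf{OF}}$ is provably equivalent to a quantifier-free sentence, and a quantifier-free sentence mentions only $0,1,+,-,\cdot,<$, so its truth value is fixed by the computation inside the prime subring and is therefore the same in every model of $\texttt{\textbf{OF}}\!+\!\texttt{\textbf{IVT}}$.

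The principal obstacle is the first-order bookkeeping inside Sturm's Theorem: the Sturm chain of a \emph{parametric} polynomial $\mathfrak{p}(x,\bar y)$ depends on which leading coefficients vanish for particular values of $\bar y$, so the Euclidean division step must be handled by a finite case split on the vanishing patterns of these coefficients, and the length of the chain (bounded by the formal degree) must be uniform in $\bar y$. A related subtlety is the treatment of multiple roots and of the behaviour of $V$ at endpoints where some $\mathfrak{p}_i$ vanishes; this requires the careful case analysis mentioned above, justified using $\texttt{\textbf{IVT}}$ and Rolle's Theorem as drawn from Theorem~\ref{prop:ivt}. Everything else in the argument is routine algebraic manipulation within $\texttt{\textbf{OF}}$.
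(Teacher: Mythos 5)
Your proposal is correct in outline, but it takes a genuinely different route from the one in the paper's Appendix. Both arguments proceed by quantifier elimination and both reduce to eliminating $\exists x$ from a conjunction of polynomial equations and inequalities; your key lemma, however, is Sturm's theorem (in its parametric, Sturm--Tarski form, counting roots subject to sign conditions via sign variations of the Euclidean remainder chain), whereas the paper, following Kreisel--Krivine, never forms a Sturm chain. Instead it inducts on a syntactic degree of the bounded formula $\exists x\!\in]a,b[:\bigwedge_i\mathfrak{p}_i(x)\!=\!0\wedge\bigwedge_j\mathfrak{q}_j(x)\!>\!0$, first reducing to at most one equation by pseudo-division steps that do not raise the degree, then splitting on the sign of $\mathfrak{p}'$ at the root and using $\texttt{\textbf{IVT}}$, the continuity of polynomials (Lemma~\ref{lem:cont}), and the derivative-sign principle (Theorem~\ref{prop:ivt}) to replace the existential quantifier by endpoint conditions $\Phi_k(a)\wedge\Psi_k(b)$ of strictly smaller degree, to which the induction hypothesis applies. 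What your route buys is a single transparent combinatorial invariant, $V(a)-V(b)$, that makes the elimination algorithmic in a well-known way; what it costs is exactly the bookkeeping you flag --- the uniform-in-parameters case split on vanishing leading coefficients in the Euclidean divisions, the treatment of non-squarefree $\mathfrak{p}$, and the product trick needed to fold the side conditions $\mathfrak{q}_j\!>\!0$ into variation counts --- each of which must itself be verified first-order from $\texttt{\textbf{IVT}}$ and Rolle's theorem. The paper's degree induction absorbs that bookkeeping into its induction hypothesis $(\star)$ and avoids Sturm sequences altogether, at the price of a more intricate interval-splitting case analysis. Your closing step (a quantifier-free sentence is decided in the prime subring, hence uniformly across all models of $\texttt{\textbf{OF}}$) coincides with the paper's, so both arguments reach the same conclusion.
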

\begin{proof}
We will employ the method of  Quantifier Elimination; i.e., we prove that every formula in the language of ordered fields
 is equivalent with a quantifier-free formula with the same free variables, over the theory $\texttt{\textbf{OF}}\!+\!\texttt{\textbf{IVT}}$. Since this theory can decide (prove or refute) quantifier-free sentences, then the quantifier elimination theorem will show that $\texttt{\textbf{OF}}\!+\!\texttt{\textbf{IVT}}$ can decide every sentence in its language; i.e., either proves it or proves its negation.

\bigskip

 For that it suffices to show that every formula of the form $\exists x\,\theta(x)$, where $\theta$ is the conjunction of atomic or negated-atomic formulas, is equivalent with a quantifier-free formula (with the same free variables). To see this, suppose that every such formula is equivalent with a quantifier-free formula. Then by induction on the complexity of a  formula $\psi$ one can show that $\psi$ is equivalent with a quantifier-free formula: the case of atomic formulas and the propositional connectives $\{\neg,\wedge,\vee,\rightarrow\}$ are trivial, and the case of $\forall$ can be reduced to that of $\exists$ by $\forall x\,\psi(x)\equiv\neg\exists x\,\neg\psi(x)$. It remains to show the equivalence of $\exists x\,\psi(x)$ with  a quantifier-free formula, where $\psi$ is  a quantifier-free formula. Write $\psi$ in   disjunctive normal form $\bigvee\hspace{-1.5ex}\bigvee_i\theta_i$ where each $\theta_i$ is a conjunctions of some atomic and/or    negated-atomic formulas. So, $\exists x\,\psi(x)\equiv\exists x\,\bigvee\hspace{-1.5ex}\bigvee_i\theta_i\equiv
\bigvee\hspace{-1.5ex}\bigvee_i\exists x\,\theta_i$, and  by the induction hypothesis each of $\exists x\,\theta_i$ is equivalent to some quantifier-free formula; thus the whole formula is so.

\bigskip

Since every term in the language $\{+,0,-,\times,1\}$ with the free variable $x$ is a polynomial of $x$ whose coefficients are some $x$-free terms, then every atomic formula with $x$ is equivalent to $\mathfrak{p}(x)\!=\!0$ or $\mathfrak{q}(x)\!\!>\!\!0$ for some polynomials $\mathfrak{p},\mathfrak{q}$. Noting that the negation sign   $\neg$ can be eliminated by

$[\mathfrak{p}(x)\!\neq\!0]\equiv [\mathfrak{p}(x)\!\!>\!\!0\vee\mathfrak{p}(x)\!\!<\!\!0]$ and $[\mathfrak{q}(x)\!\not>\!0]\equiv[\mathfrak{q}(x)\!=\!0\vee \mathfrak{q}(x)\!\!<\!\!0]$,

\noindent
we can consider atomic formulas only. So, we consider  the formulas of the following form for some polynomials $\{\mathfrak{p}_i(x)\}_{i<\ell}$ and
$\{\mathfrak{q}_j(x)\}_{j<n}$:  $\exists x\,[\bigwedge\hspace{-1.5ex}\bigwedge_{i<\ell}
\mathfrak{p}_i(x)\!=\!0\;\wedge\;
\bigwedge\hspace{-1.5ex}\bigwedge_{j<n}
\mathfrak{q}_j(x)\!\!>\!\!0]$.
Finally, it suffices to consider the bounded formulas of the form $\exists x\!\in]a,b[:\psi(x)$ since every formula   $\exists x\,\psi(x)$ is equivalent with
$\exists x\!\!<\!\!\!-\!1\psi(x) \vee \psi(\!-\!1) \vee \exists x\!\in]\!-\!1,1[:\psi(x) \vee \psi(1) \vee \exists x\!\!>\!\!1\psi(x)$, and also we have

$\exists x\!\!<\!\!\!-\!1\psi(x)\equiv \exists y\!\in]0,1[:\psi(\!-\!y^{\!-\!1})$ and $\exists x\!\!>\!\!1\psi(x)\equiv \exists y\!\in]0,1[:\psi(y^{\!-\!1})$.

\noindent
Let us note that for any polynomials $\mathfrak{p}(x)\!=\!\sum_{i\leqslant m}a_ix^i$ and $\mathfrak{q}(x)\!=\!\sum_{j\leqslant l}b_jx^j$ and any $y\!\!>\!\!0$ we have
$$\begin{cases} \mathfrak{p}(y^{\!-\!1})\!=\!0 \leftrightarrow
\sum_{i\leqslant m}a_iy^{-i}\!=\!0\leftrightarrow\sum_{i\leqslant m}a_iy^{m-i}\!=\!0
\leftrightarrow\sum_{j\leqslant m}a_{m-j}y^j\!=\!0, & \textrm{and} \\ \mathfrak{q}(y^{\!-\!1})\!>\!0 \leftrightarrow
\sum_{j\leqslant l}b_jy^{-j}\!>\!0\leftrightarrow\sum_{j\leqslant l}b_jy^{l-j}\!>\!0
\leftrightarrow\sum_{i\leqslant l}a_{l-i}y^i\!>\!0; &  \end{cases}$$
and so any atomic formula $\theta(y^{-1})$ (and also $\theta(\!-\!y^{-1})$), for $y\!\!>\!\!0$, is equivalent with another atomic formula $\eta(y)$.

Whence, we show the equivalence of all the formulas in  the following form with a quantifier-free formula:
$$(\dagger)\qquad \varphi(a,b)\!=\!\exists x\!\in]a,b[:\bigwedge\hspace{-2.2ex}\bigwedge_{i<\ell}
\mathfrak{p}_i(x)\!=\!0\wedge
\bigwedge\hspace{-2.45ex}\bigwedge_{j<n}
\mathfrak{q}_j(x)\!\!>\!\!0$$
This will be proved by induction on the degree of a formula which we define as follows:

\medskip

\begin{itemize}
\item[---]
for a term $\mathfrak{p}(x)\!=\!\sum_{i\leqslant m}a_ix^i$ let $\deg_x \mathfrak{p}\!=\!m$;
\item[---]
for atomic formulas  $\mathfrak{p}(x)\!=\!0$ or $\mathfrak{q}(x)\!\!>\!\!0$,  let

\qquad {$\deg_x(\mathfrak{p}(x)\!=\!0)\!=\!\deg_x\mathfrak{p}$ and
$\deg_x(\mathfrak{q}(x)\!\!>\!\!0)\!=\!1\!+\!
\deg_x\mathfrak{q}$;}
\item[---]
finally the $\deg_x$ of a formula is the maximum of the $\deg_x$'s of its atomic sub-formulas.
\end{itemize}

\bigskip

\noindent What we prove is:

\medskip

\begin{center}
\noindent  $(\star)$ \quad
for any formula $\varphi(a,b)$ as ($\dagger$) above,
there are some formulas $\{\Phi_k(y),\Psi_k(y)\}_{k<m}$ 
such that $\texttt{\textbf{OF}}\!+\!\texttt{\textbf{IVT}}
\!+\!a\!\!<\!\!b\vdash
\varphi(a,b)\leftrightarrow
\bigvee\hspace{-1.5ex}\bigvee_{k<m}
[\Phi_k(a)\wedge\Psi_k(b)]$;

 and moreover the $\deg_y$ of $\{\Phi_k(y),\Psi_k(y)\}$'s are less than $\deg_x(\varphi(a,b))$.
\end{center}

\medskip

\noindent
This will be shown by induction on $\hbar\!=\!\deg_x\big(\varphi(a,b)\big)$; let us note that here $a,b$ are treated as (new) variables. If $\hbar\!=\!0$, then $x$ appears only superficially in $\varphi(a,b)$ and so it is equivalent with a quantifier-free formula. Now, suppose that we have the desired conclusion ($\star$) for all the formulas with $\deg_x$ less than $\hbar$.

\bigskip

\noindent ${\bf (1)}$ \quad
First, we consider the case of $\ell\!=\!0$; i.e., the formulas that are in
the following form:
$$(\dagger)\qquad \varphi(a,b)\!=\!\exists x\!\in]a,b[:
\bigwedge\hspace{-2.45ex}\bigwedge_{j<n}
\mathfrak{q}_j(x)\!\!>\!\!0.$$
By Lemma~\ref{lem:cont},  $\mathfrak{q}_j$'s are positive at a point in $]a,b[$ if and only if they are positive in some   sub-interval $]u,v[\,\subseteq\,]a,b[$. So, we have  $\varphi(a,b)\equiv {\sf F}(a,b)\vee\bigvee\hspace{-1.5ex}\bigvee_{i<n}{\sf G}_i(a,b)\vee\bigvee\hspace{-1.5ex}\bigvee_{i,j<n}{\sf H}_{i,j}(a,b)$,  where ${\sf F}$, $\{{\sf G}_i\}_{i<n}$ and $\{{\sf H}_{i,j}\}_{i,j<n}$ are  as follows:

\bigskip

\begin{itemize}
\item[]  ${\sf F}(a,b)\!=\!\forall x\!\in]a,b[:\bigwedge\hspace{-1.5ex}\bigwedge_{j<n}
\mathfrak{q}_j(x)\!\!>\!\!0$,
\item[] ${\sf G}_i(a,b)\!=\!\big[\exists u\!\in]a,b[:\mathfrak{q}_i(u)\!=\!0\wedge{\sf F}(a,u) \big]\bigvee\big[\exists v\!\in]a,b[:\mathfrak{q}_i(v)\!=\!0\wedge{\sf F}(v,b)\big]$, and
\item[] ${\sf H}_{i,j}(a,b)\!=\!\exists u,v\!\in]a,b[: u\!\!<\!\!v \wedge  \mathfrak{q}_i(u)\!=\!0 \wedge \mathfrak{q}_j(v)\!=\!0 \wedge {\sf F}(u,v)$.
\end{itemize}

\bigskip

\noindent
The formula ${\sf F}(a,b)$ is equivalent with  $\bigwedge\hspace{-1.5ex}\bigwedge_{j<n}
\mathfrak{q}_j(a)\!\geqslant\!0\wedge
\bigwedge\hspace{-1.5ex}\bigwedge_{j<n}\!\neg\exists x\!\in]a,b[:\mathfrak{q}_j(x)\!=\!0$, whose $\deg_x$ is less than $\hbar$, and so by the induction hypothesis ($\star$) is equivalent with the formula $\bigvee\hspace{-1.5ex}\bigvee_{k<m}
[\Phi_k(a)\wedge\Psi_k(b)]$ for some $x$-free formulas $\{\Phi_k(y),\Psi_k(y)\}_{k<m}$ whose $\deg_y$ are less than $\hbar$. So, for any $i\!\!<\!\!n$,   $G_i(a,b)$ is equivalent to the disjunction of the   formulas (over $k\!\!<\!\!m$):
$$\big(\Phi_k(a)\wedge\exists u\!\in]a,b[:[\mathfrak{q}_i(u)\!=\!0\wedge\Psi_k(u)]\big)
\bigvee\big(\exists v\!\in]a,b[:
[\mathfrak{q}_i(v)\!=\!0\wedge
\Phi_k(v)]\wedge\Psi_k(b)\big).$$
Since the $\deg_u$ of $\mathfrak{q}_i(u)\!=\!0\wedge\Psi_k(u)$ and the $\deg_v$ of $\mathfrak{q}_i(v)\!=\!0\wedge
\Phi_k(v)$ are less than $\hbar$ then  the induction hypothesis ($\star$) applies to all ${\sf G}_i$'s. Finally, for ${\sf H}_{i,j}(a,b)$ we note that it is equivalent with the disjunction (over $k<m$) of the following formulas:
$$\exists u\!\in]a,b[:\big(\mathfrak{q}_i(u)\!=\!0\wedge\Phi_k(u)
\wedge\exists v\!\in]u,b[:[\mathfrak{q}_j(v)\!=\!0\wedge\Psi_k(v)]\big).$$
Now, the formula $\mathfrak{q}_j(v)\!=\!0\wedge\Psi_k(v)$ has $\deg_v$ less than $\hbar$; so by the induction hypothesis ($\star$) there are formulas $\{\Theta_{j,k,\iota}(z),\Upsilon_{j,k,\iota}(z)\}_{\iota<l}$ with $\deg_z$ less than $\hbar$ such that  
$\exists v\!\in]u,b[:[\mathfrak{q}_j(v)\!=\!0\wedge\Psi_k(v)]$ is equivalent with $\bigvee\hspace{-1.5ex}\bigvee_{\iota<l}
[\Theta_{j,k,\iota}(u)\wedge\Upsilon_{j,k,\iota}(b)]$. Thus, ${\sf H}_{i,j}(a,b)$ is equivalent to the disjunction of  the formulas  $\exists u\!\in]a,b[:\big(\mathfrak{q}_i(u)\!=\!0\wedge\Phi_k(u)
\wedge\Theta_{j,k,\iota}(u)
\big)\wedge\Upsilon_{i,j,\iota}(b)$, for $k\!\!<\!\!m,\iota\!\!<\!\!l$,  to which the induction hypothesis ($\star$) apply, since the $\deg_u$ of the all the formulas $\mathfrak{q}_i(u)\!=\!0\wedge\Phi_k(u)
\wedge\Theta_{j,k,\iota}(u)$ are   $<\hbar$.

\bigskip

\noindent ${\bf (2)}$ \quad
Second, we consider the case of $\ell\!\!>\!\!0$; let us note that we can assume $\ell\!=\!1$ since we have $\bigwedge\hspace{-1.5ex}\bigwedge_{i<\ell}
\alpha_i\!=\!0 \iff \sum_{i<\ell}\alpha_i^2\!=\!0$. So, we may replace all the $\mathfrak{p}_i(x)$'s with a single polynomial $\sum_{i<\ell}\mathfrak{p}^2_i(x)$; but this will increase the $\deg_x$ of the resulted formula. There is another way of reducing $\ell$ (the number of polynomials $\mathfrak{p}_i$'s) without increasing the $\deg_x$ of the formula:
\begin{itemize}
\item[\underline{(I)}] For polynomials  $\mathfrak{p}(x)\!=\!\alpha x^{d}\!+\!\sum_{i<d}a_ix^i$ and $\mathfrak{q}(x)\!=\!\beta x^{e}\!+\!\sum_{j<e}b_jx^j$   assume that $d\!\!\geqslant\!\!e$. Put $\mathfrak{r}(x)\!=\!\mathfrak{q}(x)\!-\!\beta x^e$ and $\mathfrak{s}(x)\!=\!\beta\mathfrak{p}(x)-\alpha x^{d-e}\mathfrak{q}(x)$; then we have

\centerline{$\big[\mathfrak{p}(u)\!=\!
    \mathfrak{q}(u)\!=\!0\big]
    \iff\big[\beta\!=\!0\wedge\mathfrak{p}(u)\!=\!
\mathfrak{r}(u)\!=\!0\big]\vee\big[\beta\!\neq\!0\wedge
\mathfrak{s}(u)\!=\!\mathfrak{q}(u)\!=\!0\big]$.}

Continuing this way, at least one of the two polynomials will disappear and we will be left with at most  one polynomial; and the $\deg_x$ of the last formula will be non-greater than the $\deg_x$ of the first formula.
\end{itemize}
So, we can safely assume that $\ell\!=\!1$;
thus  $\varphi(a,b)\!=\!\exists x\!\in]a,b[:\mathfrak{p}(x)\!=\!0\wedge
\bigwedge\hspace{-1.5ex}\bigwedge_{j<n}
\mathfrak{q}_j(x)\!\!>\!\!0$ and  also  $\deg_x(\varphi(a,b))\!=\!\hbar$.
We can still transform the formula to an equivalent one in which we have that $\deg_x\mathfrak{q}_j\!\!<\!\!\deg_x\mathfrak{p}$ for all $j<n$:
\begin{itemize}
\item[\underline{(II)}]  If, say, $\deg_x\mathfrak{q}_1\!\!\geqslant\!
    \deg_x\mathfrak{p}$, then write $\mathfrak{p}(x)\!=\!\alpha x^{d}\!+\!\sum_{i<d}a_ix^i$ and $\mathfrak{q}_1(x)\!=\!\beta x^{e}\!+\!\sum_{j<e}b_jx^j$   with  $d\!\!\leqslant\!\!e$. Put $\mathfrak{r}(x)\!=\!\mathfrak{p}(x)\!-\!\alpha x^d$ and $\mathfrak{s}(x)\!=\alpha^2\!\mathfrak{q}_1(x)\!-\!
    \alpha\beta x^{e-d}\mathfrak{p}(x)$; then we have

{$\big[\mathfrak{p}(u)\!=\!0
\wedge\mathfrak{q}_1(u)\!\!>\!\!0\big] \iff  \big[\alpha\!=\!0\wedge\mathfrak{r}(u)\!=\!0\wedge
\mathfrak{q}_1(u)\!\!>\!\!0\big]
\vee\big[\alpha\!\neq\!0\wedge
\mathfrak{p}(u)\!=\!0\wedge \mathfrak{s}(u)\!\!>\!\!0\big]$.}
Continuing this way, either the equality ($\mathfrak{p}(x)\!=\!0$) will disappear (and so we will have the first case) or the degree of the inequality $\deg_x(\mathfrak{q}(x)\!\!>\!\!0)$ will be non-greater than the degree of the equality, $\deg_x(\mathfrak{p}(x)\!=\!0)$.
\end{itemize}
So, assume that in the formula   $$(\dagger)\quad\varphi(a,b)\!=\!\exists x\!\in]a,b[:\mathfrak{p}(x)\!=\!0\wedge
\bigwedge\hspace{-2.45ex}\bigwedge_{j<n}
\mathfrak{q}_j(x)\!\!>\!\!0$$ we have  that   $\deg_x(\varphi(a,b))\!=\!\hbar\!=\!\deg_x\mathfrak{p}$
 and also
$\bigwedge\hspace{-1.5ex}\bigwedge_{j<n}
\deg_x\mathfrak{q}_j\!\!<\!\!\hbar$.
 Now, the formula $\varphi(a,b)$ is equivalent to $\varphi_1(a,b)\vee\varphi_2(a,b)\vee\varphi_3(a,b)$ where
\begin{itemize}
\item[] $\varphi_1(a,b)\!=\!\!\exists x\!\in]a,b[:\mathfrak{p}(x)\!=\!0\wedge
    \mathfrak{p}'(x)\!=\!0\wedge
\bigwedge\hspace{-1.5ex}\bigwedge_{j<n}
\mathfrak{q}_j(x)\!\!>\!\!0$,
\item[] $\varphi_2(a,b)\!=\!\!\exists x\!\in]a,b[:\mathfrak{p}(x)\!=\!0\wedge
    \mathfrak{p}'(x)\!>\!0\wedge
\bigwedge\hspace{-1.5ex}\bigwedge_{j<n}
\mathfrak{q}_j(x)\!\!>\!\!0$, and
\item[]  $\varphi_3(a,b)\!=\!\!\exists x\!\in]a,b[:\mathfrak{p}(x)\!=\!0\wedge
    \mathfrak{p}'(x)\!<\!0\wedge
\bigwedge\hspace{-1.5ex}\bigwedge_{j<n}
\mathfrak{q}_j(x)\!\!>\!\!0$.
\end{itemize}
Since, $\deg_x\mathfrak{p}'\!\!<\!\!\deg_x\mathfrak{p}$ (see Definition~\ref{def:der}) then by applying \underline{(I)} to $\varphi_1$ we get an equivalent formula with $\deg_x$ less than $\hbar$ and then can apply the induction hypothesis ($\star$).
For $\varphi_2$ we note that, by  $\texttt{\textbf{IVT}}$ and Theorem~\ref{prop:ivt}, all the $\mathfrak{q}_j$'s and also $\mathfrak{p}'$ are strictly positive at some point in  $]a,b[$ in which $\mathfrak{p}$ vanishes, if and only if all the $\mathfrak{q}_j$'s and   $\mathfrak{p}'$ are strictly positive
on some open sub-interval  $]u,v[\,\subseteq\,]a,b[$ such that ($\mathfrak{p}$ is monotonically increasing and so) $\mathfrak{p}(u)\!\!<\!\!0\!\!<\!\!\mathfrak{p}(v)$.
Whence, $\varphi_2(a,b)$ is equivalent to the disjunction of the following formulas:
\begin{itemize}\itemindent=1em
\item[(i)] $\mathfrak{p}(a)\!\!<\!\!0\!\!<\!\!\mathfrak{p}(b)
    \wedge{\sf F}(a,b)$,
\item[(ii)] $\bigvee\hspace{-1.5ex}\bigvee_{i<n}
    \Big(\big[\mathfrak{p}(a)\!\!<\!\!0\wedge\exists u\!\in]a,b[:\big(\mathfrak{q}_i(u)\!=\!0\wedge
    \mathfrak{p}(u)\!\!>\!\!0\wedge{\sf F}(a,u)\big)\big]\bigvee$

   \hfill  $\big[\mathfrak{p}(b)\!\!>\!\!0\wedge\exists v\!\in]a,b[:\big(\mathfrak{q}_i(v)\!=\!0\wedge
    \mathfrak{p}(v)\!\!<\!\!0\wedge{\sf F}(v,b)\big)\big]\Big)$, and
\item[(iii)] $\bigvee\hspace{-1.5ex}\bigvee_{i,j<n}\Big(
    \exists u,v\in\!]a,b[:\big[\mathfrak{q}_i(u)\!=\!0\wedge
    \mathfrak{q}_j(v)\!=\!0\wedge
    \mathfrak{p}(u)\!\!<\!\!0\!\!<\!\!\mathfrak{p}(v)
    \wedge {\sf F}(u,v)\big]\Big)$.
\end{itemize}
The formula (i) has been treated before (it is equivalent to a formula with $\deg_x$ less than $\hbar$). The formulas (ii) and (iii) can be equivalently transformed to formulas with $\deg_x$ less than $\hbar$ by \underline{(I)} and \underline{(II)} above. So, the whole formula $\varphi_2$, and very similarly $\varphi_3$, can be written in equivalent forms in such a way that the induction hypothesis ($\star$) applies to them.
 \end{proof}

\end{document}